\newtheorem{theorem}{Theorem}[section]
\newtheorem{lemma}[theorem]{Lemma}
\newtheorem{proposition}[theorem]{Proposition}
\newtheorem{corollary}[theorem]{Corollary}
\newtheorem{example}[theorem]{Example}
\numberwithin{equation}{section}
\def\rad{\mathop{\hbox{rad}}}
\def\Ob{\mathop{\hbox{Ob}}}
\def\pd{\mathop{\hbox{pd}}}
\def\dim{\mathop{\hbox{dim}}}
\def\top{\mathop{\hbox{Top}}}
\title{Algebras stratified for all linear orders}
\author{Liping Li}
\address{School of Mathematics, University of Minnesota, MN, 55455, USA}
\email{lixxx480@math.umn.edu}
\thanks{The author would like to thank his thesis advisor, Professor Peter Webb, for many invaluable suggestions in numerous discussions. He also thanks Professor Mazorchuk for pointing out some works on relevant topics, which were unknown to the author before. The referee is appreciated for the careful reading and many nice suggestions.}
\begin{document}

\begin{abstract}
In this paper we describe several characterizations of basic finite-dimensional $\Bbbk$-algebras $A$ stratified for all linear orders, and classify their graded algebras as tensor algebras satisfying some extra property. We also discuss whether for a given linear order $\preccurlyeq$, $\mathcal{F} (_{\preccurlyeq} \Delta)$, the category of $A$-modules with $_{\preccurlyeq} \Delta$-filtrations, is closed under cokernels of monomorphisms, and classify quasi-hereditary algebras satisfying this property.
\end{abstract}
\keywords{Standardly stratified, directed categories, quasi-hereditary.}
\subjclass[2000]{16G10, 16G20.}
\maketitle

Let $A$ be a basic finite-dimensional $\Bbbk$-algebra where $\Bbbk$ is an algebraically closed filed, and $A$-mod be the category of finitely generated left $A$-modules. Dlab and Ringel showed in \cite{Dlab2} that $A$ is quasi-hereditary for all linear orders if and only if $A$ is a hereditary algebra. In \cite{Frisk1} stratification property of $A$ for different orders was studied. These results motivate us to classify algebras standardly stratified or properly stratified for all linear orders, which include hereditary algebras as special cases.

Choose a fixed set of orthogonal primitive idempotents $\{ e_{\lambda} \}_{\lambda \in \Lambda}$ for $A$ such that $\sum _{\lambda \in \Lambda} e_{\lambda}= 1$. We can define a $\Bbbk$-linear category $\mathcal{A}$ as follows: $\Ob \mathcal{A} = \{ e_{\lambda} \} _{\lambda \in \Lambda}$; for $\lambda, \mu \in \Lambda$, the morphism space $\mathcal{A} (e_{\lambda}, e_{\mu}) = e_{\mu} A e_{\lambda}$. The category $\mathcal{A}$ is skeletal, and the endomorphism algebra of each object is local. A $\ \Bbbk$-linear representation of $\mathcal{A}$ is a $\Bbbk$-linear functor from $\mathcal{A}$ to the category of finite-dimensional $\Bbbk$-vector spaces. It is clear that $\mathcal{A}$-rep, the category of finite-dimensional $\Bbbk$-linear representations of $\mathcal{A}$, is equivalent to $A$-mod. Call $\mathcal{A}$ the \textit{associated category} of $A$.

we prove that if $A$ is standardly stratified for all linear orders, then its associated category is a \textit{directed category}, i.e., there is a partial order $\leqslant$ on $\Lambda$ such that $\lambda \leqslant \mu$ whenever $\mathcal{A} (e_{\lambda}, e_{\mu}) = e_{\mu} A e_{\lambda} \neq 0$. Please see \cite{Li1, Li2} for details of directed categories. With this terminology, we get:

\begin{theorem}
Let $A$ be a basic finite-dimensional algebra and $\mathcal{A}$ be its associated category. Then the following are equivalent:
\begin{enumerate}
\item $A$ is standardly stratified for all linear orders.
\item $\mathcal{A}$ is a directed category and $J = \bigoplus _{\lambda \neq \mu \in \Lambda} e_{\mu} A e_{\lambda}$ is a projective $A$-module.
\item The trace tr$ _{P_{\lambda}} (P_{\mu})$ is a projective module for $\lambda, \mu \in \Lambda$.
\item The projective dimension $\pd _A M \leqslant 1$ for all $M \in \mathcal{F} (_{\preccurlyeq} \Delta)$ and linear orders $\preccurlyeq$.
\end{enumerate}
\end{theorem}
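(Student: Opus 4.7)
The plan is to close the cycle $(1) \Rightarrow (3) \Rightarrow (2) \Rightarrow (4) \Rightarrow (1)$. The key leverage throughout is that quantifying over all linear orders allows us, for any ordered pair $\lambda \neq \mu$, to choose $\preccurlyeq$ placing $\mu$ at the maximum and $\lambda$ immediately below, so that the standardly stratified hypothesis collapses to a single statement about the trace $\mathrm{tr}_{P_\mu}(P_\lambda) = A e_\mu A e_\lambda$.

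For $(1) \Rightarrow (3)$, with such a $\preccurlyeq$ the kernel of $P_\lambda \twoheadrightarrow {}_{\preccurlyeq}\Delta_\lambda$ equals this trace; it must admit a filtration by ${}_{\preccurlyeq}\Delta_\nu$'s with $\nu \succ \lambda$, but the only such $\nu$ is $\mu$ and ${}_{\preccurlyeq}\Delta_\mu = P_\mu$ (as $\mu$ is the maximum), so the kernel is a sum of copies of $P_\mu$, hence projective. The case $\lambda = \mu$ is automatic since $\mathrm{tr}_{P_\lambda}(P_\lambda) = P_\lambda$. For $(3) \Rightarrow (2)$, directedness follows by contradiction: if $e_\mu A e_\lambda$ and $e_\lambda A e_\mu$ are both nonzero with $\lambda \neq \mu$, then $A e_\mu A e_\lambda$ is a nonzero projective submodule of $P_\lambda$ generated by its $e_\mu$-component, so a top analysis identifies it as $P_\mu^k$ with $k \geq 1$; symmetrically $A e_\lambda A e_\mu \cong P_\lambda^{k'}$ with $k' \geq 1$. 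Each embeds into the radical of the ambient projective (since the indices differ), so composing yields a strict self-embedding $P_\mu \hookrightarrow P_\mu$ of smaller dimension, a contradiction. To obtain $J$ projective, write $J = \bigoplus_\lambda J_\lambda$ with $J_\lambda = \mathrm{tr}_{\bigoplus_{\mu > \lambda} P_\mu}(P_\lambda)$ and induct on $|\Lambda|$: remove a poset-maximal $\mu_0$, note that $A e_{\mu_0} A e_\lambda = e_{\mu_0} A e_\lambda$ in the directed setting, split off the projective $\mu_0$-row from $J$, and reduce to $\bar A = A / A e_{\mu_0} A$, which inherits both directedness and property (3).

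For $(2) \Rightarrow (4)$, fix a linear order $\preccurlyeq$; the kernel of $P_\lambda \twoheadrightarrow {}_{\preccurlyeq}\Delta_\lambda$ is $\sum_{\mu \succ \lambda} A e_\mu A e_\lambda$, a sub-sum of $J_\lambda$. The same maximal-element peeling exhibits it as a direct summand of the projective $J_\lambda$, hence projective, so $\pd\, {}_{\preccurlyeq}\Delta_\lambda \leq 1$; this propagates to arbitrary $M \in \mathcal{F}({}_{\preccurlyeq}\Delta)$ by induction on filtration length through the standard long exact sequence argument. For $(4) \Rightarrow (1)$, the projective kernel of $P_\lambda \twoheadrightarrow {}_{\preccurlyeq}\Delta_\lambda$ has top generated by the $e_\mu$-components for $\mu \succ \lambda$, so it decomposes as $\bigoplus_\mu P_\mu^{k_\mu}$ with each $\mu \succ \lambda$; descending induction on $\preccurlyeq$-position provides a $\Delta^{\preccurlyeq}$-filtration of each such $P_\mu$, and gluing gives the required filtration of $P_\lambda$. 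The main obstacle I foresee is the splitting steps in $(3) \Rightarrow (2)$ and $(2) \Rightarrow (4)$: the submodules $A e_\mu A e_\lambda$ do not a priori sit in $P_\lambda$ as a direct sum, and turning the peeling of a maximal-element contribution into a split short exact sequence of $A$-modules requires careful use of the directed structure and the quotient by the two-sided ideal $A e_{\mu_0} A$.
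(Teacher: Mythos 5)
Your steps $(1)\Rightarrow(3)$ and $(4)\Rightarrow(1)$ are essentially correct (and close to the paper's own arguments for $(4)\Rightarrow(3)$ and for building $\Delta$-filtrations). The genuine problems sit exactly in the two peeling/splitting steps you flag at the end, and they are not mere technicalities. First, in $(3)\Rightarrow(2)$ your directedness argument only excludes $2$-cycles ($e_{\mu}Ae_{\lambda}\neq 0$ and $e_{\lambda}Ae_{\mu}\neq 0$); directedness requires excluding oriented cycles of every length, so you must run the dimension count around a whole cycle, as in the paper's Proposition 1.3 (this part is fixable by the same method). Much more seriously, the inductive step ``split off the projective $\mu_0$-row from $J$'' with $\mu_0$ maximal is false. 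Take $A$ to be the path algebra of $x\rightarrow y\rightarrow z$ (hereditary, so (3) holds). Then $Je_x=\rad P_x\cong P_y$, the $\mu_0=z$ row inside it is $\mathrm{tr}_{P_z}(P_x)=\langle \beta\alpha\rangle\cong P_z$, and the resulting sequence $0\rightarrow P_z\rightarrow P_y\rightarrow S_y\rightarrow 0$ does not split; moreover the quotient $\bar{J}e_x\cong \bar{P}_y=S_y$ is projective over $\bar{A}=A/Ae_zA$ but not over $A$, so even if the sequence did split you could not conclude that $J$ is $A$-projective. Killing a maximal vertex changes the projectives at all the vertices below it; this is precisely why the paper's Proposition 1.6 peels off a \emph{minimal} element $\lambda$ (so that $\bar{P}_{\nu}\cong P_{\nu}$ for $\nu\neq\lambda$) and then needs a separate $3\times 3$-diagram argument, using a second quotient, to handle the remaining column $Je_{\lambda}$.

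The same example breaks your $(2)\Rightarrow(4)$: for the linear order $y\prec x\prec z$ the algebra is standardly stratified and the kernel of $P_x\rightarrow {}_{\preccurlyeq}\Delta_x$ is $\langle\beta\alpha\rangle\cong P_z$, which is projective but is \emph{not} a direct summand of $Je_x\cong P_y$; so ``the same maximal-element peeling exhibits it as a direct summand of the projective $J_{\lambda}$'' is not a valid justification, and you have given no other proof that $\sum_{\mu\succ\lambda}\mathrm{tr}_{P_{\mu}}(P_{\lambda})$ is projective for an arbitrary linear order. The paper's route here is genuinely different: from (2) one gets $\pd A_0\leqslant 1$ via $0\rightarrow J\rightarrow A\rightarrow A_0\rightarrow 0$, hence $\pd M\leqslant 1$ for $M\in\mathcal{F}({}_{\leqslant}\Delta)$ for the order $\leqslant$ refining the directed structure, and then one needs $\mathcal{F}({}_{\preccurlyeq}\Delta)\subseteq\mathcal{F}({}_{\leqslant}\Delta)$ for every linear order $\preccurlyeq$, which rests on $\mathcal{F}({}_{\leqslant}\Delta)$ being closed under cokernels of monomorphisms (Proposition 1.4) together with Theorem 0.3(2). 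So the two implications you reduce to ``careful splitting'' are exactly the ones where the paper's substantially more delicate arguments are needed; as written, your proposal does not establish $(3)\Rightarrow(2)$ or $(2)\Rightarrow(4)$.
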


Note that $J = \bigoplus _{\lambda \neq \mu \in \Lambda} e_{\mu} A e_{\lambda}$ can be viewed as a two-sided ideal of $A$ with respect to this chosen set of orthogonal primitive idempotents since $\mathcal{A}$ is a directed category. Moreover, we have $A = A_0 \oplus J$ as vector spaces, where $A_0 = \bigoplus _{x \in \text{Ob } \mathcal{A}} \mathcal{A} (x, x)$ constitutes of all endomorphisms in $\mathcal{A}$. Thus we consider the associated graded algebra $\check{A}$ with $\check{A}_0 = A_0$ and $\check{A}_i = J^i /J^{i+1}$ for $i \geqslant 1$, and obtain the following classification:

\begin{theorem}
Let $A$ be a basic finite-dimensional $\Bbbk$-algebra whose associated category $\mathcal{A}$ is directed. Then the following are equivalent:
\begin{enumerate}
\item $A$ is standardly stratified (resp., properly stratified) for all linear orders;
\item the associated graded algebra $\check{A}$ is standardly stratified (resp., properly stratified) for all linear orders;
\item $\check{A}$ is the tensor algebra generated by $A_0 = \bigoplus _{\lambda \in \Lambda} \mathcal{A} (e_{\lambda}, e_{\lambda}) = e_{\lambda} A e_{\lambda}$ and a left (resp., left and right) projective $A_0$-module $\check{A}_1$.
\end{enumerate}
\end{theorem}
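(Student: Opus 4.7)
The plan is to use Theorem 1 to translate conditions (1) and (2) into projectivity of the radical, and then to analyze the graded structure of $\check{A}$ directly in order to identify it as a tensor algebra.

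First I would establish (2) $\Leftrightarrow$ (3). Applying Theorem 1 to $\check{A}$, whose associated category is directed iff $\mathcal{A}$ is (since $e_\mu \check{A} e_\lambda \ne 0$ iff $e_\mu A e_\lambda \ne 0$ for $\lambda \ne \mu$), condition (2) is equivalent to $\bar{J} := \bigoplus_{n\geqslant 1} \check{A}_n$ being projective as a left $\check{A}$-module. For (3) $\Rightarrow$ (2): if $\check{A} = T_{A_0}(V)$ with $V$ left-projective over $A_0$, then $\bar{J} \cong \check{A} \otimes_{A_0} V$ is projective. For (2) $\Rightarrow$ (3): the graded left $\check{A}$-module $\bar{J}$ is generated by its degree-$1$ part $V := \check{A}_1$, since $\check{A}$ is generated in degrees $0$ and $1$; projectivity combined with a Nakayama-style argument for graded modules forces $\bar{J} \cong \check{A} \otimes_{A_0} V$ with $V$ left-projective over $A_0$. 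Iterating the resulting identification $\check{A}_n \cong \check{A}_{n-1} \otimes_{A_0} V$ in each degree yields $\check{A}_n \cong V^{\otimes n}$, so $\check{A} = T_{A_0}(V)$. The properly stratified variant follows by running the symmetric argument on right $\check{A}$-modules, which additionally forces $V$ to be right-projective.

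Next I would prove (1) $\Leftrightarrow$ (3); combined with the above this gives (1) $\Leftrightarrow$ (2). For (1) $\Rightarrow$ (3): by Theorem 1, $J_A$ is projective over $A$, so $V \cong A_0 \otimes_A J_A$ is projective over $A_0$ (as $A_0 \otimes_A P$ is a summand of some $A_0^N$). Recognizing $J_A$ as the projective cover of its top $V/\rad(A_0)V$ yields a decomposition $J_A \cong \bigoplus (Ae_\lambda)^{n_\lambda}$, giving the dimension equality $\dim J_A = \dim(A \otimes_{A_0} V)$. Choosing a left $A_0$-lift $\tilde{V} \subseteq J$ of $V$ (possible since $V$ is $A_0$-projective), the natural multiplication map $A \otimes_{A_0} \tilde{V} \twoheadrightarrow J_A$ is surjective by Nakayama (since $J = \tilde{V} + J^2$), and equal dimensions force it to be an isomorphism. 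Induction using the short exact sequence $0 \to J \to A \to A_0 \to 0$ then gives $J^n \cong A \otimes_{A_0} V^{\otimes n}$, whence $\check{A}_n = J^n/J^{n+1} \cong V^{\otimes n}$ and $\check{A} = T_{A_0}(V)$. Conversely for (3) $\Rightarrow$ (1): once $\check{A} = T_{A_0}(V)$, lift $V$ to $\tilde{V} \subseteq J$ (as left $A_0$-submodule for the standardly stratified case, as $A_0$-bimodule for the properly stratified case using right-projectivity); the induced algebra homomorphism $T_{A_0}(\tilde{V}) \to A$ is surjective, and the dimension equality $\dim A = \dim \check{A} = \dim T_{A_0}(V)$ forces it to be an isomorphism, so $A$ is itself a tensor algebra, $J_A \cong A \otimes_{A_0} V$ is projective, and (1) follows from Theorem 1.

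The main obstacle will be showing the multiplication map $A \otimes_{A_0} \tilde{V} \to J_A$ (and dually $T_{A_0}(\tilde{V}) \to A$) is an isomorphism. Surjectivity is a clean Nakayama argument, but injectivity rests on the dimension count, which in the (1) $\Rightarrow$ (3) direction requires correctly identifying the projective cover of the top of $J_A$ as a left $A$-module — a calculation involving $\rad(A) = \rad(A_0) + J$. In the (3) $\Rightarrow$ (1) direction the delicate point is ensuring $T_{A_0}(\tilde{V}) \to A$ is even well-defined as an algebra map, which demands a bimodule-compatible choice of $\tilde{V}$ in the properly stratified case; this is precisely where the extra right-projectivity of $V$ over $A_0$ is indispensable.
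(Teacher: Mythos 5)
Your (2) $\Leftrightarrow$ (3) step is sound and is essentially the paper's route (its Lemma 2.4 together with Theorem 0.1 applied to $\check{A}$), and your (1) $\Rightarrow$ (3) argument via the projective cover of $J$ is workable. The fatal gap is (3) $\Rightarrow$ (1). If that argument were correct it would prove $A \cong T_{A_0}(\check{A}_1) \cong \check{A}$ as (ungraded) algebras, and this is false in general: the paper's Example 2.3 (the algebra of Example 1.10) satisfies both (1) and (3), yet $\check{A} \not\cong A$. The precise error is that a lift $\tilde{V} \subseteq J$ of $\check{A}_1 = J/J^2$ chosen only as a \emph{left} $A_0$-submodule induces no algebra homomorphism $T_{A_0}(\tilde{V}) \to A$ at all; for such a map you need the projection $J \to J/J^2$ to split as a map of $A_0$-\emph{bimodules}, and neither left projectivity nor projectivity on both sides of $\check{A}_1$ over $A_0$ gives this (a bimodule splitting would require projectivity over $A_0 \otimes_{\Bbbk} A_0^{op}$, a much stronger condition). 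Concretely, in Example 2.3 one has $\bar{\alpha}\cdot\delta = 0$ in $J/J^2$ while every preimage $\alpha'$ of $\bar{\alpha}$ in $J$ satisfies $\alpha'\delta = \alpha\delta \neq 0$, so no sub-bimodule complement of $J^2$ in $J$ exists; hence no choice of lift rescues the construction, and the intended conclusion $A \cong \check{A}$ is simply not available. The same objection applies to your claim that right-projectivity yields a bimodule-compatible lift in the properly stratified case.

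The repair is to stop trying to identify $A$ with $\check{A}$ and instead transfer only the projectivity statement across the grading, which is what the paper does: prove that a finitely generated $A$-module $M$ is projective if and only if its associated graded module $\check{M}$ is a projective $\check{A}$-module (compare the projective cover of $M$ with that of $\check{M}$ in degree $0$, using $J \subseteq \rad A$ so that tops are preserved — this is the paper's Lemma 2.1), and apply this to $M = J$, whose associated graded module is $\bigoplus_{i \geqslant 1} \check{A}_i$. Combined with Theorem 0.1 (for $A$ and for $\check{A}$, whose associated category is directed exactly when $\mathcal{A}$ is), this gives (1) $\Leftrightarrow$ (2) directly, and your tensor-algebra argument then closes the loop with (2) $\Leftrightarrow$ (3); the right-module version of the same reasoning handles the properly stratified statement.
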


We also describe a combinatorial construction for $\check {\mathcal{A}}$, the associated graded category of $\check{A}$.

Let $\preccurlyeq$ be a particular linear order for which $\mathcal{A}$ is standardly stratified. It is well known that $\mathcal{F} (_{\preccurlyeq} \Delta)$ is an additive category closed under extensions, direct summands, and kernels of epimorphisms (see \cite{Cline, Dlab1, Webb}). But in general it is not closed under cokernels of monomorphisms. However, if $A$ is standardly stratified with respect to all linear orders, there exists a particular linear order (not necessarily unique) $\preccurlyeq$ for which the corresponding category $\mathcal{F} (_{\preccurlyeq} \Delta)$ is closed under cokernels of monomorphisms. It motivates us to give a criterion for whether $\mathcal{F} (_{\preccurlyeq} \Delta)$ has this property. We obtain the following result:

\begin{theorem}
Let $A$ be a finite-dimensional basic algebra standardly stratified for a linear order $\preccurlyeq$. Then:
\begin{enumerate}
\item $\mathcal{F} (_{\preccurlyeq} \Delta)$ is closed under cokernels of monomorphisms if and only if the cokernel of every monomorphism $\iota: \Delta_{\lambda} \rightarrow P$ is contained in $\mathcal{F} (_{\preccurlyeq} \Delta)$, where $P$ is an arbitrary projective module and $\lambda \in \Lambda$.
\item If $\mathcal{F} (_{\preccurlyeq} \Delta)$ is closed under cokernels of monomorphisms and $A$ is standardly stratified for another linear order $\preccurlyeq'$, then $\mathcal{F} (_{\preccurlyeq'} \Delta) \subseteq \mathcal{F} (_{\preccurlyeq} \Delta)$.
\item If $A$ is quasi-hereditary, then $\mathcal{F} (_{\preccurlyeq} \Delta)$ is closed under cokernels of monomorphisms if and only if $A$ is a quotient of a finite-dimensional hereditary algebra and all standard modules are simple.
\end{enumerate}
\end{theorem}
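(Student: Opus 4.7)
For part (1), the forward implication is trivial, so I focus on the converse. Assuming the hypothesis, I aim to show that every monomorphism $M \hookrightarrow N$ with $M, N \in \mathcal{F}(_{\preccurlyeq}\Delta)$ has cokernel in $\mathcal{F}(_{\preccurlyeq}\Delta)$. My plan uses two successive reductions. First, fixing a $\Delta$-filtration $0 = M_0 \subset M_1 \subset \cdots \subset M_n = M$ with $M_i/M_{i-1} \cong \Delta_{\lambda_i}$, the intermediate cokernels $C_i := N/M_i$ fit into short exact sequences $0 \to \Delta_{\lambda_{i+1}} \to C_i \to C_{i+1} \to 0$, so an induction on $i$ reduces the problem to the length-one case $M = \Delta_\lambda$. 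Second, for such $\Delta_\lambda \hookrightarrow N$, I take a projective cover $\pi: P \twoheadrightarrow N$ whose kernel $K$ lies in $\mathcal{F}(_{\preccurlyeq}\Delta)$ by closure under kernels of epimorphisms, and form $E := \pi^{-1}(\Delta_\lambda)$; then $E$ is an extension of $\Delta_\lambda$ by $K$, hence in $\mathcal{F}(_{\preccurlyeq}\Delta)$, and $P/E \cong N/\Delta_\lambda$. Thus everything reduces to showing that every monomorphism $E \hookrightarrow P$ with $P$ projective and $E \in \mathcal{F}(_{\preccurlyeq}\Delta)$ has cokernel in $\mathcal{F}(_{\preccurlyeq}\Delta)$. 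I would prove this by induction on $\ell(E)$: the base $\ell(E) = 1$ is the hypothesis, and for the step one peels off a standard factor $E_1 \subseteq E$ (so $P/E_1 \in \mathcal{F}(_{\preccurlyeq}\Delta)$ by the hypothesis) and then invokes a prior case of the main statement on $E/E_1 \hookrightarrow P/E_1$. The main obstacle is organizing a double induction (say lexicographically on $(\dim N, \ell(M))$) that keeps the two reductions compatible, since passing through a projective cover can raise both the ambient dimension and the filtration length of the subobject.

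For part (2), I would prove $\Delta'_\lambda \in \mathcal{F}(_{\preccurlyeq}\Delta)$ for every $\lambda$ by downward induction on $\lambda$ in the order $\preccurlyeq'$, after which closure under extensions upgrades the inclusion to $\mathcal{F}(_{\preccurlyeq'}\Delta) \subseteq \mathcal{F}(_{\preccurlyeq}\Delta)$. Writing $\Delta'_\lambda = P(\lambda)/U'(\lambda)$ where $U'(\lambda)$ is the trace in $P(\lambda)$ of $\bigoplus_{\mu \succ' \lambda} P(\mu)$, the assumption that $A$ is standardly stratified for $\preccurlyeq'$ gives $U'(\lambda)$ a $\Delta'$-filtration with factors $\Delta'_\mu$ for $\mu \succ' \lambda$. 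The base case is $\lambda$ maximal in $\preccurlyeq'$, where $U'(\lambda) = 0$ and $\Delta'_\lambda = P(\lambda) \in \mathcal{F}(_{\preccurlyeq}\Delta)$. For the inductive step, the inductive hypothesis places each $\Delta'_\mu$ with $\mu \succ' \lambda$ into $\mathcal{F}(_{\preccurlyeq}\Delta)$, so closure under extensions gives $U'(\lambda) \in \mathcal{F}(_{\preccurlyeq}\Delta)$; then closure under cokernels of monomorphisms (from part (1)) applied to $0 \to U'(\lambda) \to P(\lambda) \to \Delta'_\lambda \to 0$ yields $\Delta'_\lambda \in \mathcal{F}(_{\preccurlyeq}\Delta)$.

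For part (3), the direction assuming all $\Delta_\lambda$ simple is immediate: a composition series is then automatically a $\Delta$-filtration, so $\mathcal{F}(_{\preccurlyeq}\Delta)$ coincides with $A$-mod and closure holds trivially. Conversely, assume $A$ is quasi-hereditary and $\mathcal{F}(_{\preccurlyeq}\Delta)$ is closed under cokernels of monomorphisms; I would show $\Delta_\lambda = S_\lambda$ for every $\lambda$ by induction on $\lambda$ in $\preccurlyeq$, starting from the minimum. The base case holds since in the quasi-hereditary setting $\Delta_\lambda$ has composition factors $S_\mu$ only for $\mu \preccurlyeq \lambda$ with $[\Delta_\lambda : S_\lambda] = 1$, so for $\lambda = \min$ one forces $\Delta_{\min} = S_{\min}$. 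For the inductive step, assume $\Delta_\mu = S_\mu$ for every $\mu \prec \lambda$: every composition factor of $\rad(\Delta_\lambda)$ has the form $S_\mu = \Delta_\mu$ with $\mu \prec \lambda$, so any composition series of $\rad(\Delta_\lambda)$ is itself a $\Delta$-filtration, putting $\rad(\Delta_\lambda) \in \mathcal{F}(_{\preccurlyeq}\Delta)$; applying closure under cokernels of monomorphisms to $0 \to \rad(\Delta_\lambda) \to \Delta_\lambda \to S_\lambda \to 0$ gives $S_\lambda \in \mathcal{F}(_{\preccurlyeq}\Delta)$, which forces $\Delta_\lambda = S_\lambda$ since $S_\lambda$ has a unique composition factor. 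Once all standards are simple, every arrow in the Gabriel quiver of $A$ must go from a $\preccurlyeq$-smaller vertex to a $\preccurlyeq$-larger one, so the quiver is acyclic and $A$ is a quotient of its (hereditary) path algebra.
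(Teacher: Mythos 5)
Your parts (2) and (3) are correct and close in spirit to the paper. For (2) the paper takes a $\preccurlyeq'$-maximal $\lambda$, notes $Ae_{\lambda}A$ is a direct sum of copies of $P_{\lambda}$, passes to $\bar A=A/Ae_{\lambda}A$ and recurses; your downward induction showing each $\Delta'_{\lambda}\in\mathcal{F}(_{\preccurlyeq}\Delta)$ via the trace filtration of $U'(\lambda)$ is the same argument phrased without quotient algebras. For (3) your bottom-up induction on $(\Lambda,\preccurlyeq)$, applying closure to $0\to\rad\Delta_{\lambda}\to\Delta_{\lambda}\to S_{\lambda}\to 0$, is a genuinely cleaner organization than the paper's induction on $|\Lambda|$ through $\bar A=A/Ae_{\lambda}A$, since it avoids the paper's unproved claim that the closure property is inherited by the quotient algebra; your acyclicity argument at the end matches the paper's. (Minor remark: in the converse of (3) you only use that all standard modules are simple; that is fine, since the hereditary-quotient condition is then automatic.)

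Part (1), however, has a genuine gap, and it sits exactly where you flag it. Your two reductions are the paper's Lemma 3.2 (arbitrary $L\in\mathcal{F}(_{\preccurlyeq}\Delta)$ inside a projective, via the projective-cover/snake-lemma pullback) and Lemma 3.3 (a standard module inside an arbitrary $M\in\mathcal{F}(_{\preccurlyeq}\Delta)$, via peeling the filtration of the submodule); but each of those conditions is individually equivalent to full closure, whereas the hypothesis of (1) weakens both simultaneously, so one of the stronger conditions must actually be derived from the weak one. Your closing induction on $\ell(E)$ does not do this: in the inductive step you invoke ``a prior case of the main statement'' for $E/E_1\hookrightarrow P/E_1$, whose target is no longer projective, and the only mechanism your scheme has for non-projective targets is to pass to a projective cover, which sends the pair back to a mono $E'\hookrightarrow Q$ with $\dim Q$ at least as large as before and $\ell(E')=\ell(\Omega)+\ell(\text{source})$ possibly as large as $\ell(E)$ (indeed, when the peeled copy of $\Delta_{\mu}$ lies in $\rad P$ one returns to literally the same configuration $L\hookrightarrow P$). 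Hence the proposed lexicographic induction on $(\dim N,\ell(M))$ has no quantity that strictly decreases around a full cycle of your reductions, and naming the obstacle does not discharge it: what is missing is an actual argument that cokernels of monomorphisms $\Delta_{\lambda}\to P$ being filtered forces, say, cokernels of monomorphisms from arbitrary filtered modules into projectives to be filtered. (The paper itself is terse here, asserting that (1) ``follows immediately'' from its Lemmas 3.2 and 3.3, so the combination step is precisely where the substantive work lies; your write-up stops short of supplying it.)
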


Thus if $A$ is standardly stratified for $\preccurlyeq$ and $\preccurlyeq'$ such that both $\mathcal{F} (_{\preccurlyeq} \Delta)$ and $\mathcal{F} (_{\preccurlyeq'} \Delta)$ are closed under cokernels of monomorphisms, then $\mathcal{F} (_{\preccurlyeq} \Delta) = \mathcal{F} (_{\preccurlyeq'} \Delta)$.

In practice it is hard to determine whether there exists a linear order $\preccurlyeq$ for which $A$ is standardly stratified and the corresponding category $\mathcal{F} (_{\preccurlyeq} \Delta)$ is closed under cokernels of monomorphisms. Certainly, checking all linear orders is not an ideal way to do this. We then describe an explicit algorithm to construct a set $\mathcal{L}$ of linear orders with respect to all of which $A$ is standardly stratified. Moreover, if there exists a linear order $\preccurlyeq$ such that $A$ is standardly stratified and the corresponding category $\mathcal{F} (_{\preccurlyeq} \Delta)$ is closed under cokernels of monomorphisms, then $\preccurlyeq \in \mathcal{L}$.

This paper is organized as follows. In Section 1 we describe some equivalent conditions for $A$ to be standardly stratified with respect to all linear orders, and prove the first theorem. The classification is described in Section 2 with a proof of the second theorem. The problem that whether $\mathcal{F} (_{\preccurlyeq} \Delta)$ is closed under cokernels of monomorphisms is considered in Section 3. In the last section we describe the algorithm and give some examples.

We introduce some conventions here. All algebras in this paper are finite-dimensional basic $\Bbbk$-algebras, and all modules are finitely generated left modules if we do not make other assumptions. Composition of morphisms is from right to left. The field $\Bbbk$ is supposed to be algebraically closed. We view the zero module 0 as a projective (or free) module since this will simplify the expressions and proofs of many statements.

\section{Characterizations of algebras stratified for all linear orders}

We start with some preliminary knowledge on stratification theory, for which the reader can find more details in \cite{Cline, Dlab1, Dlab2, Webb, Xi}. Throughout this section $A$ is a basic finite-dimensional $\Bbbk$-algebra with a chosen set of orthogonal primitive idempotents $\{ e_{\lambda} \} _{\lambda \in \Lambda}$ indexed by a set $\Lambda$ such that $\sum _{\lambda \in \Lambda} e_{\lambda} = 1$. Let $P_{\lambda} = A e_{\lambda}$ and $S_{\lambda} = P_{\lambda} / \rad P_{\lambda}$. According to \cite{Cline}, $A$ is \textit{standardly stratified} with respect to a fixed preorder $\preccurlyeq$ if there exist modules $\Delta_{\lambda}$, $\lambda \in \Lambda$, such that the following conditions hold:
\begin{enumerate}
\item the composition factor multiplicity $[\Delta_{\lambda} : S_{\mu}] = 0$ whenever $\mu \npreceq \lambda$;
and
\item for every $\lambda \in \Lambda$ there is a short exact sequence $0 \rightarrow K_{\lambda} \rightarrow P_{\lambda} \rightarrow \Delta_{\lambda} \rightarrow 0$ such that $K_{\lambda}$ has a filtration with factors $\Delta_{\mu}$ where $\mu \succ \lambda$.
\end{enumerate}
The \textit{standard module} $\Delta_{\lambda}$ is the largest quotient of $P_{\lambda}$ with only composition factors $S_{\mu}$ such that $\mu \preccurlyeq \lambda$. Similarly, the \textit{proper standard module} $\bar{ \Delta}_{\lambda}$ can be defined as the largest quotient of $P_{\lambda}$ such that all composition factors $S_{\mu}$ satisfy $\mu \succ \lambda$ except for one copy of $S_{\lambda}$. \textit{Costandard module} $\nabla _{\lambda}$ and \textit{proper costandard module} $\bar {\nabla} _{\lambda}$ are defined dually. Since by Proposition 1.1 $A$ is standardly stratified for all preorders if and only if it is a direct sum of local algebras, in this paper we only deal with linear orders instead of general preorders as in \cite{Frisk2, Frisk3, Webb} to get a class of algebras whose size is big enough to contain hereditary algebras.

In the case that $A$ is standardly stratified, standard modules $\Delta_{\lambda}$ have the following description:
\begin{equation*}
\Delta_{\lambda} = P_{\lambda} / \sum _{\mu \succ \lambda} \text{tr} _{P_{\mu}} (P_{\lambda}),
\end{equation*}
where tr$_{P_{\mu}} (P_{\lambda})$ is the trace of $P_{\mu}$ in $P _{\lambda}$ (\cite{Dlab1, Webb}). Let $\Delta$ be the direct sum of all standard modules and $\mathcal{F} (\Delta)$ be the full subcategory of $A$-mod such that each object in $\mathcal{F} (\Delta)$ has a filtration by standard modules. Similarly we define $\mathcal{F} (\overline {\Delta})$, $\mathcal{F} (\nabla)$ and $\mathcal{F} (\overline {\nabla})$. According to Theorem 3.4 in \cite{Webb}, $A$ is standardly stratified if and only if $_AA \in \mathcal{F} (\Delta)$. The algebra is called \textit{properly stratified} if $_AA \in \mathcal{F} (\Delta) \cap \mathcal{F} (\overline {\Delta})$. If the endomorphism algebra of each standard module is one-dimensional, then $A$ is called \textit{quasi-hereditary}.

\begin{proposition}
The algebra $A$ is standardly stratified for all preorders if and only if $A$ is a direct sum of local algebras.
\end{proposition}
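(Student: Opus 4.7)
My plan is to prove the two directions separately. The forward (\emph{only if}) direction is the heart of the matter and will exploit a carefully chosen preorder, while the backward direction is essentially formal.

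For the backward direction, I will assume $A = \bigoplus_{\lambda \in \Lambda} A_\lambda$ with each $A_\lambda = e_\lambda A e_\lambda$ local. Then $P_\lambda = Ae_\lambda$ sits inside the block $A_\lambda$, so every composition factor of $P_\lambda$ is the unique simple module $S_\lambda$ of $A_\lambda$. For an arbitrary preorder $\preccurlyeq$ on $\Lambda$, the choice $\Delta_\lambda = P_\lambda$ will satisfy both axioms at once: condition~(1) holds trivially since $[P_\lambda : S_\mu] = 0$ for every $\mu \neq \lambda$, and condition~(2) holds with $K_\lambda = 0$.

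For the forward direction, my key move is to apply the stratification hypothesis to the \emph{discrete} preorder on $\Lambda$, namely $\lambda \preccurlyeq \mu$ if and only if $\lambda = \mu$. This is trivially reflexive and transitive, and no two distinct elements are $\preccurlyeq$-comparable, so the cited description $\Delta_\lambda = P_\lambda / \sum_{\mu \succ \lambda} \text{tr}_{P_\mu}(P_\lambda)$ collapses to $\Delta_\lambda = P_\lambda$ by the empty-sum convention. Axiom~(1) then forces $[P_\lambda : S_\mu] = 0$ for every $\mu \neq \lambda$, which via the identity $[P_\lambda : S_\mu] = \dim_\Bbbk e_\mu P_\lambda$ (valid for basic algebras) is equivalent to $e_\mu A e_\lambda = 0$ whenever $\mu \neq \lambda$. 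From this I will verify directly that each $e_\lambda$ is central in $A$, yielding an algebra decomposition $A = \bigoplus_\lambda e_\lambda A e_\lambda$ whose summands $e_\lambda A e_\lambda \cong \text{End}_A(P_\lambda)^{\mathrm{op}}$ are local because the $e_\lambda$ are primitive.

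The only conceptual obstacle is choosing the correct preorder. Once the discrete one is selected, condition~(2) becomes trivially satisfied (since no $\mu \succ \lambda$ exists, $K_\lambda$ must be zero) and condition~(1) pins down the block structure immediately; the remaining centrality check and identification of the local blocks are routine.
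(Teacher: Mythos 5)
Your proposal is correct and follows essentially the same route as the paper: both directions hinge on the discrete preorder (no two distinct elements comparable), from which condition (2) forces $\Delta_{\lambda} \cong P_{\lambda}$ and condition (1) forces $e_{\mu} A e_{\lambda} = 0$ for $\mu \neq \lambda$. Your added details (the multiplicity identity over an algebraically closed field and the centrality check for the $e_{\lambda}$) merely spell out what the paper leaves implicit.
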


\begin{proof}
If $A$ is a direct sum of local algebras, it is standardly stratified for all preorders such that all standard modules coincide with indecomposable projective modules. Conversely, suppose that $A$ is standardly stratified for all preorders. To show that $A$ is a direct sum of local algebras, it is sufficient to show that Hom$ _A (P_{\lambda}, P_{\mu}) \cong e_{\lambda} A e_{\mu} = 0$ for all pairs of distinct elements $\lambda \neq \mu \in \Lambda$.

Consider the preorder on $\Lambda$ such that every two different elements cannot be compared. By the second condition in the definition of standardly stratified algebras, $\Delta_{\lambda} \cong P_{\lambda}$ for all $\lambda \in \Lambda$. By the first condition, $P_{\lambda}$ only has composition factors $S_{\lambda}$. Therefore, Hom$ _A (P_{\lambda}, P_{\mu}) \cong e_{\lambda} A e_{\mu} = 0$ for all $\lambda \neq \mu \in \Lambda$.
\end{proof}

The following statement is an immediate corollary of Dlab's theorem (\cite{Frisk3, Webb}).

\begin{proposition}
The algebra $A$ is properly stratified for a linear order $\preccurlyeq$ on $\Lambda$ if and only if both $A$ and $A^{op}$ are standardly stratified with respect to $\preccurlyeq$, in other words, $A$ is both left and right standardly stratified.
\end{proposition}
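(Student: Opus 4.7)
The plan is to reduce both sides to statements about filtrations of left $A$-modules and then invoke Dlab's theorem. Unpacking the definitions recalled in the excerpt, $A$ is properly stratified for $\preccurlyeq$ exactly when $_AA \in \mathcal{F}(\Delta) \cap \mathcal{F}(\bar\Delta)$, and standardly stratified for $\preccurlyeq$ exactly when $_AA \in \mathcal{F}(\Delta)$. Since the latter appears on both sides of the claimed equivalence, the content of the proposition reduces to
\[
{}_AA \in \mathcal{F}(\bar\Delta) \iff A^{op} \text{ is standardly stratified for } \preccurlyeq,
\]
under the standing assumption that $A$ itself is standardly stratified for $\preccurlyeq$.

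My first step would be to rewrite the right-hand side as a condition on left $A$-modules by means of the standard $\Bbbk$-duality $D = \text{Hom}_\Bbbk(-, \Bbbk)$. The functor $D$ is a contravariant equivalence between right and left $A$-modules that sends the right projective $e_\lambda A$ to the left injective envelope $I_\lambda$ of $S_\lambda$, preserves composition factor multiplicities under the natural bijection of simples, and turns quotients into submodules. Consequently $D$ identifies the right standard module $\Delta_\lambda^{r}$ (that is, the standard module of $A^{op}$ for $\lambda$ and $\preccurlyeq$) with the left costandard module $\nabla_\lambda$. Since $D(A_A) = DA = \bigoplus_\lambda I_\lambda$ as a left $A$-module, the condition ``$A^{op}$ is standardly stratified for $\preccurlyeq$'' translates to $DA \in \mathcal{F}(\nabla)$.

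The second step is to apply Dlab's theorem in the form cited in \cite{Frisk3, Webb}: for a standardly stratified algebra $A$, one has $_AA \in \mathcal{F}(\bar\Delta)$ if and only if $DA \in \mathcal{F}(\nabla)$. Chaining this with the translation of the previous paragraph completes the proof: the two conditions on the left and right of the reduced equivalence become two equivalent reformulations of $DA \in \mathcal{F}(\nabla)$.

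The only real obstacle is bookkeeping: one must keep the left/right conventions straight and verify carefully that the contravariant functor $D$ carries right standard modules to left costandard modules (not, say, to proper costandard modules). Once this is checked, the proposition is a one-line composition of equivalences, which is consistent with the excerpt advertising it as an immediate corollary of Dlab's theorem.
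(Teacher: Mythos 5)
Your proof is correct and follows essentially the same route as the paper: both reduce the claim to the $\bar{\Delta}$-condition on ${}_AA$ and settle it by composing the $\Bbbk$-duality $D$ with Dlab's theorem. The only cosmetic difference is the side on which $D$ is applied --- you dualize the right standard modules to left costandard modules and use the instance ${}_AA \in \mathcal{F}(\bar{\Delta}) \Leftrightarrow DA \in \mathcal{F}(\nabla)$, while the paper dualizes the left proper standard condition to right proper costandard modules and then applies Dlab's theorem (Webb's Theorem 3.4) for right modules; these two instances are equivalent via the same duality.
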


\begin{proof}
The algebra $A$ is properly stratified if and only if $\mathcal{F} (\Delta) \cap \mathcal{F} (\overline {\Delta})$ contains all left projective $A$-modules. By duality, this is true if and only if $\mathcal{F} (\Delta)$ contains all left projective $A$-modules and $\mathcal{F} (\overline {\nabla} _A)$ contains all right injective $A$-modules, where $\overline {\nabla} _A$ is a right $A$-module. By Theorem 3.4 in \cite{Webb}, we conclude that $A$ is properly stratified if and only if $\mathcal{F} (\Delta)$ contains all left projective $A$-module and $\mathcal{F} (\Delta_A)$ contains all right projective $A$-modules, here $\Delta_A$ is the direct sum of all right standard modules. That is, $A$ is properly stratified if and only if it is both left and right standardly stratified.
\end{proof}

Recall the associated category $\mathcal{A}$ of $A$ is a \textit{directed} category if there is a partial order $\leqslant$ on $\Lambda$ such that $\mathcal{A} (e_{\lambda}, e_{\mu}) = e_{\mu} A e_{\lambda} \neq 0$ implies $\lambda \leqslant \mu$.

\begin{proposition}
If $A$ is standardly stratified for all linear orders on $\Lambda$, then the associated category $\mathcal{A}$ is a directed category.
\end{proposition}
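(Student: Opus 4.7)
The plan is to introduce the relation $\lambda \leadsto \mu$ on $\Lambda$ defined by $e_\mu A e_\lambda \neq 0$, i.e.\ $[P_\lambda : S_\mu] \geq 1$, and let $\leqslant$ be its reflexive-transitive closure. Reflexivity and transitivity then hold by construction, and the implication $e_\mu A e_\lambda \neq 0 \Rightarrow \lambda \leqslant \mu$ is automatic. The remaining issue is antisymmetry, which amounts to ruling out every nontrivial cycle
\[
\lambda_1 \leadsto \lambda_2 \leadsto \cdots \leadsto \lambda_n \leadsto \lambda_1
\]
whose members are pairwise distinct, with $n \geq 2$.

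Suppose for contradiction such a cycle exists. For each $i$ (indices read modulo $n$) I would select a linear order $\preccurlyeq_i$ on $\Lambda$ that makes $\lambda_{i+1}$ the maximum element, and invoke the hypothesis that $A$ is standardly stratified with respect to $\preccurlyeq_i$. Since $\lambda_{i+1}$ sits at the top of $\preccurlyeq_i$, the standard module $\Delta_{\lambda_{i+1}}$ coincides with $P_{\lambda_{i+1}}$. The cycle hypothesis $\lambda_i \leadsto \lambda_{i+1}$ says that $S_{\lambda_{i+1}}$ is a composition factor of $P_{\lambda_i}$. Expanding a $\Delta$-filtration of $P_{\lambda_i}$ and using additivity of composition multiplicities, some $\Delta_\nu$-factor of that filtration must satisfy $[\Delta_\nu : S_{\lambda_{i+1}}] \geq 1$; condition (1) in the definition of standard stratification then forces $\lambda_{i+1} \preccurlyeq_i \nu$, and maximality of $\lambda_{i+1}$ pins $\nu = \lambda_{i+1}$.

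Consequently $\Delta_{\lambda_{i+1}} = P_{\lambda_{i+1}}$ occurs as a factor in the $\Delta$-filtration of $P_{\lambda_i}$, distinct from the top factor $\Delta_{\lambda_i}$ since $\lambda_i \neq \lambda_{i+1}$. Counting dimensions yields
\[
\dim P_{\lambda_i} \;\geq\; \dim \Delta_{\lambda_i} + \dim \Delta_{\lambda_{i+1}} \;=\; \dim \Delta_{\lambda_i} + \dim P_{\lambda_{i+1}} \;\geq\; \dim P_{\lambda_{i+1}} + 1,
\]
where the last step uses that $\Delta_{\lambda_i}$ has top $S_{\lambda_i}$ and so is nonzero. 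Iterating this inequality around the cycle produces $\dim P_{\lambda_1} \geq \dim P_{\lambda_1} + n$, which is absurd.

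The main delicate point, in my view, is the tracing step identifying the $S_{\lambda_{i+1}}$-contribution as coming from a $\Delta_{\lambda_{i+1}}$-factor rather than from $\Delta_{\lambda_i}$ or from some other $\Delta_\nu$. The leverage that makes this work is precisely the freedom granted by the hypothesis to place $\lambda_{i+1}$ at the top of a linear order for which $A$ is still standardly stratified; this maximality uniquely determines $\nu$ and upgrades a plain composition-factor statement into a filtration-level one strong enough to drive the dimension bound.
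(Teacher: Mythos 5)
Your proof is correct, and its skeleton is the same as the paper's: assume an oriented cycle exists, use the hypothesis to impose a linear order making a chosen cycle vertex maximal (so that the corresponding standard module coincides with the projective there), and reach a contradiction by counting dimensions. The execution differs mildly: the paper fixes the single cycle vertex $s$ whose projective $P_s$ has largest dimension and gets the contradiction at that one vertex from $0 \neq \mathrm{tr}_{P_s}(P_{s-1}) \cong P_s^{m} \subseteq \rad P_{s-1}$, whereas you work edge by edge, extracting a filtration factor $\Delta_{\lambda_{i+1}} \cong P_{\lambda_{i+1}}$ of $P_{\lambda_i}$ from condition (1) together with additivity of composition multiplicities, and then telescope the strict inequalities $\dim P_{\lambda_i} > \dim P_{\lambda_{i+1}}$ around the cycle. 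Your version stays closer to the bare definition of standard stratification (no appeal to the trace description of the kernels $K_\lambda$), at the cost of invoking a different linear order for each edge of the cycle; the paper needs only one order but uses the fact that the trace of the projective at the maximal vertex is a direct sum of copies of it. The only point worth making explicit in your write-up is that $\Delta_{\lambda_i} \neq 0$ (needed for the ``$+1$'' in your inequality), which holds because the standard module is by definition a quotient of $P_{\lambda_i}$ with top $S_{\lambda_i}$.
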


\begin{proof}
Suppose that the conclusion is not true. Then there is an oriented cycle $e_0 \rightarrow e_1 \rightarrow \ldots \rightarrow e_n = e_0$ with $n > 1$ such that $\mathcal{A} (e_i, e_{i+1}) = e_{i+1} A e_i \neq 0$ for all $0 \leqslant i \leqslant n-1$. Take some $e_s$ such that $\dim _{\Bbbk} P_s \geqslant \dim _{\Bbbk} P_i$ for all $0 \leqslant i \leqslant n-1$, where $P_s = A e_s$ coincides with the vector space formed by all morphisms starting from $e_s$. Then for an arbitrary linear order $\preccurlyeq$ with respect to which $e_s$ is maximal, we claim that $A$ is not standardly stratified.

Indeed, if $A$ is standardly stratified with respect to $\preccurlyeq$, then tr$ _{P_s} (P_i) \cong P_s^{m_i}$ for some $m_i \geqslant 0$. Consider $P_{s-1}$ (if $s=0$ we consider $P_{n-1}$). Since $\mathcal{A} (e_{s-1}, e_s) = e_s A e_{s-1} \neq 0$, tr$ _{P_s} (P_{s-1}) \neq 0$, so $m_{s-1} \geqslant 1$. But on the other hand, since tr$ _{P_s} (P_{s-1}) \subseteq \rad P_{s-1}$, and $\dim _{\Bbbk} P_s \geqslant \dim _{\Bbbk} P_{s-1}$, we should have $m_{s-1} = 0$. This is absurd. Therefore, there is no oriented cycle in $\mathcal{A}$, and the conclusion follows.
\end{proof}

The following proposition describes some properties of algebras with directed associated categories.

\begin{proposition}
Suppose that the associated category $\mathcal{A}$ of $A$ is a directed category with respect to a partial order $\leqslant$. Then
\begin{enumerate}
\item The standard modules $_{\leqslant} \Delta_{\lambda} \cong \mathcal{A} (e_{\lambda}, e_{\lambda}) = e_{\lambda} A e_{\lambda}$ for all $\lambda \in \Lambda$.
\item $A$ is standardly stratified for $\leqslant$ if and only if for every pair $\lambda, \mu \in \Lambda$, $e_{\mu} A e_{\lambda}$ is a free $e_{\mu} A e_{\mu}$-module.
\item The category $\mathcal{F} (_{\leqslant} \Delta)$ is closed under cokernels of monomorphisms.
\end{enumerate}
\end{proposition}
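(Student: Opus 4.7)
For part (1), I would decompose $P_\lambda = A e_\lambda = \bigoplus_\nu e_\nu A e_\lambda$ and apply the description $\Delta_\lambda = P_\lambda / \sum_{\mu > \lambda} \mathrm{tr}_{P_\mu}(P_\lambda)$ of the largest quotient of $P_\lambda$ whose composition factors are $S_\mu$ with $\mu \leqslant \lambda$. Directedness forces every nonzero summand of $P_\lambda$ to satisfy $\nu \geqslant \lambda$, and each trace $\mathrm{tr}_{P_\mu}(P_\lambda) = A \cdot e_\mu A e_\lambda$ lies in $\bigoplus_{\nu \geqslant \mu} e_\nu A e_\lambda$. Summing over $\mu > \lambda$ recovers exactly $\bigoplus_{\nu > \lambda} e_\nu A e_\lambda$, because any $y \in e_\nu A e_\lambda$ with $\nu > \lambda$ equals $e_\nu \cdot y \in \mathrm{tr}_{P_\nu}(P_\lambda)$. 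What remains is $\Delta_\lambda \cong e_\lambda A e_\lambda$.

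For part (2), in the forward direction I apply the exact functor $e_\mu \cdot -$ to a $\Delta$-filtration of $P_\lambda$. Writing $R_\mu := e_\mu A e_\mu$ and using $e_\mu \Delta_\nu = \delta_{\mu\nu} R_\mu$, the module $e_\mu A e_\lambda = e_\mu P_\lambda$ inherits a filtration whose nonzero factors are all isomorphic to $R_\mu$ as a left $R_\mu$-module; since $R_\mu$ is projective over itself the filtration splits, so $e_\mu A e_\lambda$ is free. For the converse, given freeness of every $e_\mu A e_\lambda$, I would build a $\Delta$-filtration of $P_\lambda$ by choosing a $\leqslant$-maximal $\mu^*$ in $\{\mu > \lambda : e_\mu A e_\lambda \neq 0\}$, checking via directedness and maximality of $\mu^*$ that $e_{\mu^*} A e_\lambda$ is an $A$-submodule of $P_\lambda$ isomorphic as $A$-module to $\Delta_{\mu^*}^{r}$ (where $r$ is its $R_{\mu^*}$-rank), and iterating on the quotient until only the top factor $\Delta_\lambda$ remains.

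For part (3), the key first step is the intrinsic description: $M \in \mathcal{F}(_{\leqslant} \Delta)$ if and only if $e_\mu M$ is a free $R_\mu$-module for every $\mu \in \Lambda$. The forward direction repeats the splitting argument of part (2); for the converse I would peel off the $A$-submodule $e_{\mu^*} M$ for $\mu^*$ maximal in the support of $M$ and induct on support size. Given a monomorphism $X \hookrightarrow Y$ with $X, Y \in \mathcal{F}(_{\leqslant} \Delta)$, the exact functor $e_\mu \cdot -$ produces a short exact sequence $0 \to e_\mu X \to e_\mu Y \to e_\mu (Y/X) \to 0$ of left $R_\mu$-modules in which the first two terms are free, so the conclusion reduces to showing $e_\mu(Y/X)$ is free for every $\mu$.

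The step I expect to be the main obstacle is the purely ring-theoretic claim that an injection $\phi : R_\mu^a \hookrightarrow R_\mu^b$ between free modules over a finite-dimensional local $\Bbbk$-algebra automatically has free cokernel of rank $b - a$. My plan is to show that the induced map $\bar\phi : k^a \to k^b$ modulo $\mathrm{rad}(R_\mu)$ remains injective, after which Nakayama's lemma allows one to extend the image to a basis of $R_\mu^b$ and split the inclusion. If $\bar\phi$ had nontrivial kernel, some $v \in R_\mu^a$ with a unit coordinate would satisfy $\phi(v) \in \mathrm{rad}(R_\mu) R_\mu^b$; multiplying by any nonzero $s$ in the right socle of $R_\mu$ (which is nonzero since $R_\mu$ is Artinian) gives $sv \neq 0$ but $\phi(sv) = s \phi(v) = 0$ because $s \cdot \mathrm{rad}(R_\mu) = 0$, contradicting injectivity. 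Hence $e_\mu(Y/X)$ is free for every $\mu$, and the characterization yields $Y/X \in \mathcal{F}(_{\leqslant} \Delta)$.
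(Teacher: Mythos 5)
Your proposal is correct, and for part (3) it follows essentially the same skeleton as the paper: both arguments rest on the characterization that $M \in \mathcal{F}(_{\leqslant}\Delta)$ precisely when each $e_\mu M$ is a free $e_\mu A e_\mu$-module, and both then reduce closure under cokernels to the statement that a monomorphism between free modules over the finite-dimensional local algebra $R_\mu = e_\mu A e_\mu$ has free cokernel. The difference is in how that last statement is handled and in parts (1)--(2): the paper simply cites Propositions 5.5 and 5.7 of \cite{Li1} for (1) and (2), whereas you prove them directly (your trace computation for (1) and your maximal-$\mu^*$ peeling argument for (2) are both sound, the key points being that directedness makes $\bigoplus_{\nu>\lambda}e_\nu Ae_\lambda$ a submodule and that maximality of $\mu^*$ in the support forces $e_{\mu^*}Ae_\lambda$ to be an $A$-submodule on which $A$ acts through $R_{\mu^*}$). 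For the local-algebra step in (3), the paper asserts that the top of $e_\lambda L$ embeds in the top of $e_\lambda M$ ``since otherwise the first map cannot be injective'' and then concludes by a dimension count; you supply the missing justification by showing the induced map modulo the radical is injective via multiplication by a nonzero element of the left annihilator of $\rad R_\mu$ (your socle argument is correct, since $\phi(sv)=s\phi(v)=0$ while $sv\neq 0$ because one coordinate of $v$ is a unit), and you then split the inclusion by Nakayama plus a length count instead of comparing dimensions. So your route is a self-contained and slightly more detailed version of the paper's; what it buys is independence from \cite{Li1} and an explicit proof of the ring-theoretic lemma the paper leaves implicit, at the cost of length.
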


\begin{proof}
The first statement is Proposition 5.5 in \cite{Li1}. The second statement comes from Theorem 5.7 in that paper. We reminder the reader that $\mathcal{A} (e_{\mu}, e_{\mu})$ is a local algebra, so projective modules coincide with free modules. It remains to show the last statement. That is, for an arbitrary exact sequence with $L, M \in \mathcal{F} (_{\leqslant}  \Delta)$, we need to show $N \in \mathcal{F} (_{\leqslant} \Delta)$ as well:
\begin{equation*}
\xymatrix{ 0 \ar[r] & L \ar[r] & M \ar[r] & N \ar[r] & 0.}
\end{equation*}

By the structure of standard modules, it is clear that an $A$-module $K \in \mathcal{F} (_{\leqslant} \Delta)$ if and only if $e_{\lambda} K$ is a free $\mathcal{A} (e_{\lambda},e_{\lambda}) = e_{\lambda} A e_{\lambda}$-module for all $\lambda \in \Lambda$. For an arbitrary $\lambda \in \Lambda$, the above sequence induces an exact sequence of $e_{\lambda} A e_{\lambda}$-modules
\begin{equation*}
\xymatrix{ 0 \ar[r] & e_{\lambda} L \ar[r] & e_{\lambda} M \ar[r] & e_{\lambda} N \ar[r] & 0.}
\end{equation*}
Since $L, M \in \mathcal{F} (_{\leqslant} \Delta)$, we know that $e_{\lambda} L \cong (e_{\lambda} A e_{\lambda})^l$ and $e_{\lambda} M \cong (e_{\lambda} A e_{\lambda})^m$ for some $l, m \geqslant 0$.

Notice that $e_{\lambda} A e_{\lambda}$ is a local algebra. The regular module over $e_{\lambda} A e_{\lambda}$ is indecomposable, has a simple top and finite length. Therefore, the top of $e_{\lambda} L$ is embedded into the top of $e_{\lambda} M$ since otherwise the first map cannot be injective. Therefore, the top of $e_{\lambda} N$ is isomorphic to $S_{\lambda}^{m-l}$ where $S_{\lambda} \cong (e_{\lambda} A e_{\lambda}) / \rad (e_{\lambda} A e_{\lambda})$ and $e_{\lambda} N$ is the quotient module of $(e_{\lambda} A e_{\lambda}) ^{m-l}$. But by comparing dimensions we conclude that $e_{\lambda} N \cong (e_{\lambda} A e_{\lambda}) ^{m-l}$. Consequently, $e_{\lambda} N$ and hence $N$ are contained in $\mathcal{F} (_{\leqslant} \Delta)$ as well. This finishes the proof.
\end{proof}

The second statement of this proposition is the main condition of \textit{pyramid algebras} mentioned in \cite{KM}. The third statement motivates us to study the problem of whether $\mathcal{F} (_{\preccurlyeq} \Delta)$ is closed under cokernels of monomorphisms for an arbitrary linear order $\preccurlyeq$. We will return to this topic in Section 3.

In some sense the property of being standardly stratified for all linear orders is inherited by quotient algebras. Explicitly,

\begin{lemma}
If $A$ is standardly stratified for all linear orders, then for an arbitrary primitive idempotent $e_{\lambda}$, the quotient algebra $\bar{A} = A / A e_{\lambda} A$ is standardly stratified for all linear orders.
\end{lemma}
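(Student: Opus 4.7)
The plan is to reduce to Theorem~1.1 applied to $\bar{A}$: for an arbitrary linear order $\preccurlyeq'$ on $\Lambda \setminus \{\lambda\}$, I want to exhibit standard filtrations of the indecomposable projective $\bar{A}$-modules $\bar{P}_\mu$. I would extend $\preccurlyeq'$ to a linear order $\preccurlyeq$ on $\Lambda$ by placing $\lambda$ at the top, and then transfer the standard filtrations of the $P_\mu$ across the quotient map $A \to \bar{A}$.

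Under this choice $\Delta_\lambda = P_\lambda$, which is projective. For $\mu \neq \lambda$, every composition factor $S_\tau$ of $\Delta_\mu$ satisfies $\tau \preccurlyeq \mu \prec \lambda$, so $e_\lambda \Delta_\mu = 0$ and $\Delta_\mu$ is naturally an $\bar{A}$-module. A direct comparison of defining quotients identifies $\Delta_\mu$ with the $\bar{A}$-standard module $\bar{\Delta}_\mu$ defined via $\preccurlyeq'$, since modding $P_\mu$ by $\text{tr}_{P_\lambda}(P_\mu) + \sum_{\tau \succ' \mu} \text{tr}_{P_\tau}(P_\mu)$ gives the same thing as modding by $\sum_{\tau \succ \mu} \text{tr}_{P_\tau}(P_\mu)$.

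The crux is producing a $\bar{\Delta}$-filtration of $\bar{P}_\mu = P_\mu / \text{tr}_{P_\lambda}(P_\mu)$ for each $\mu \neq \lambda$. I would start with any $\Delta$-filtration of $P_\mu$ and permute its factors so that all $\Delta_\lambda$ pieces end up at the bottom. This rearrangement is legitimate because $\Delta_\lambda = P_\lambda$ is projective, so every short exact sequence $0 \to \Delta_\tau \to E \to \Delta_\lambda \to 0$ splits, allowing a $\Delta_\lambda$ factor to be swapped past the $\Delta_\tau$ immediately below. Denote the resulting bottom submodule (filtered by copies of $\Delta_\lambda = P_\lambda$) by $M$; then $P_\mu/M$ has a $\Delta$-filtration whose factors are $\Delta_\tau$ for various $\tau \neq \lambda$, so $S_\lambda$ is not a composition factor of $P_\mu/M$ and $\text{Hom}_A(P_\lambda, P_\mu/M) = 0$. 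This forces every homomorphism $P_\lambda \to P_\mu$ to land in $M$, whence $M = \text{tr}_{P_\lambda}(P_\mu)$. Passing to the quotient yields a filtration of $\bar{P}_\mu$ by $\bar{\Delta}_\tau$ with $\tau \neq \lambda$ and $\tau \succcurlyeq' \mu$.

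The main obstacle is the filtration rearrangement, which reduces cleanly to projectivity of $\Delta_\lambda$. The composition-factor condition for $\bar{\Delta}_\mu$ follows immediately from the corresponding property of $\Delta_\mu$ by restriction to $\Lambda \setminus \{\lambda\}$. Hence $\bar{A}$ is standardly stratified for $\preccurlyeq'$; since $\preccurlyeq'$ was arbitrary, the lemma follows.
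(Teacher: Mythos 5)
Your argument is correct and takes essentially the same route as the paper: the paper's proof just extends the given order on $\Lambda \setminus \{\lambda\}$ to a linear order on $\Lambda$ with $\lambda$ maximal and then invokes, as a standard fact, that $\bar{A} = A/Ae_{\lambda}A$ inherits the stratification for the restricted order --- which is exactly the transfer you verify in detail (identification of $\Delta_{\mu}$ with $\bar{\Delta}_{\mu}$, rearranging the filtration using projectivity of $\Delta_{\lambda} = P_{\lambda}$, and identifying the bottom piece with $\text{tr}_{P_{\lambda}}(P_{\mu})$). The only step to state explicitly is the reverse inclusion $M \subseteq \text{tr}_{P_{\lambda}}(P_{\mu})$, which is immediate since $M$, being filtered by copies of the projective module $P_{\lambda}$, is isomorphic to $P_{\lambda}^{k}$ and hence is a sum of images of maps $P_{\lambda} \rightarrow P_{\mu}$.
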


\begin{proof}
Let $\preccurlyeq$ be a linear order on $\Lambda \setminus \{\lambda \}$. Then we can extend it to a linear order $\tilde {\preccurlyeq}$ on $\Lambda$ by letting $\lambda$ be the unique maximal element. Since $A$ is standardly stratified for $\tilde {\preccurlyeq}$, we conclude that $\bar {A}$ is standardly stratified for $\preccurlyeq$ as well.
\end{proof}

Now suppose that the associated category $\mathcal{A}$ of $A$ is directed. Then we define
\begin{equation*}
J = \bigoplus _{ \lambda \neq \mu \in \Lambda} \mathcal{A} (e_{\lambda}, e_{\mu} ) = \bigoplus _{ \lambda \neq \mu \in \Lambda} e_{\mu} A e_{\lambda}, \qquad A_0 = \bigoplus _{ \lambda \in \Lambda} \mathcal{A} (e_{\lambda}, e_{\lambda}) = \bigoplus _{ \lambda \in \Lambda} e_{\lambda} A e_{\lambda}.
\end{equation*}
Clearly, $J$ is a two-sided ideal of $A$ with respect to this chosen set of orthogonal primitive idempotents, and $A_0 \cong A /J$ as $A$-modules.

\begin{proposition}
If $A$ is standardly stratified for all linear orders on $\Lambda$, then $J$ is a projective $A$-module.
\end{proposition}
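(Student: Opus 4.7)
The plan is to prove that each summand $Je_{\lambda}$ in the $A$-module decomposition $J = \bigoplus_{\lambda \in \Lambda} Je_{\lambda}$ is projective, by exploiting the hypothesis with a linear order tailored to $\lambda$. By Proposition 1.3, $\mathcal{A}$ is directed with respect to some partial order $\leqslant$, and consequently every composition factor $S_{\nu}$ of $P_{\mu}$ satisfies $\nu \geqslant \mu$ in $\leqslant$.

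Fix $\lambda \in \Lambda$. I will take $\preccurlyeq$ to be any linear order on $\Lambda$ whose unique minimum is $\lambda$ and whose restriction to $\Lambda \setminus \{\lambda\}$ is a linear refinement of the reverse of $\leqslant$, so that whenever $\mu, \nu \in \Lambda \setminus \{\lambda\}$ satisfy $\nu > \mu$ in the directed order one has $\nu \prec \mu$ in $\preccurlyeq$. Such a refinement exists since the reverse is a finite partial order. With this choice, $\lambda$ being the $\preccurlyeq$-minimum immediately forces ${}_{\preccurlyeq}\Delta_{\lambda} = P_{\lambda}/Je_{\lambda} = e_{\lambda}Ae_{\lambda}$, so the kernel of the canonical surjection $P_{\lambda} \twoheadrightarrow {}_{\preccurlyeq}\Delta_{\lambda}$ is precisely $Je_{\lambda}$. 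The key step, which I expect to be the most delicate, is then to verify that ${}_{\preccurlyeq}\Delta_{\mu} = P_{\mu}$ for every $\mu \neq \lambda$: any composition factor $S_{\nu}$ of $P_{\mu}$ satisfies $\nu \geqslant \mu$ in the directed order, and then either $\nu = \lambda$, giving $\nu \preccurlyeq \mu$ by minimality of $\lambda$, or $\nu \in \Lambda \setminus \{\lambda\}$, giving $\nu \preccurlyeq \mu$ by the reverse refinement; in either case $\nu \preccurlyeq \mu$, so $P_{\mu}$ is already its own $\preccurlyeq$-standard module.

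Once this identification is in hand, the hypothesis that $A$ is standardly stratified for $\preccurlyeq$ supplies a filtration of $Je_{\lambda}$ whose successive quotients are the standard modules ${}_{\preccurlyeq}\Delta_{\mu}$ with $\mu \succ \lambda$, and by the preceding paragraph each such quotient equals the projective $P_{\mu}$. A short exact sequence with projective cokernel splits, so peeling off the top of the filtration inductively exhibits $Je_{\lambda}$ as a finite direct sum of modules of the form $P_{\mu}$, hence as a projective $A$-module. Summing over $\lambda$ yields the projectivity of $J$.
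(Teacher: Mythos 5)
Your proof is correct, and it takes a genuinely different route from the paper's. The paper argues by induction on $|\Lambda|$: it passes to the quotient $\bar{A} = A/Ae_{\lambda}A$ for $\lambda$ minimal in the directed order to identify $Je_{\mu}$ with $\bar{J}e_{\mu}$ for $\mu \neq \lambda$, and then treats the remaining column $Je_{\lambda}$ separately via a second quotient $A/Ae_{\mu}A$ (with $\mu$ minimal in $\Lambda \setminus \{\lambda\}$) and a commutative diagram that splits $Je_{\lambda}$ as $\mathrm{tr}_{P_{\mu}}(P_{\lambda}) \oplus \bar{J}'_{\lambda}$. You instead handle each column $Je_{\lambda}$ in one stroke with a single well-chosen linear order: placing $\lambda$ at the bottom and reversing the directed order on the rest forces ${}_{\preccurlyeq}\Delta_{\mu} = P_{\mu}$ for all $\mu \neq \lambda$ and ${}_{\preccurlyeq}\Delta_{\lambda} = P_{\lambda}/Je_{\lambda}$, so the filtration of the kernel coming from the definition of standard stratification has projective factors and therefore splits. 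This is shorter, avoids both the induction and the quotient-algebra bookkeeping, and makes visible that the hypothesis is only needed for $|\Lambda|$ specific linear orders beyond the directedness supplied by Proposition 1.3. Two routine facts are used tacitly and deserve a sentence each: first, that when $A$ is standardly stratified the modules $\Delta_{\mu}$ appearing in the definition are the canonical standard modules, so the filtered kernel may indeed be taken to be the kernel $Je_{\lambda}$ of the canonical surjection (this is exactly the paper's displayed description $\Delta_{\lambda} = P_{\lambda}/\sum_{\mu \succ \lambda}\mathrm{tr}_{P_{\mu}}(P_{\lambda})$, which the paper also uses freely, e.g.\ in Proposition 1.1); second, that a module admitting a filtration with projective subquotients is projective, since each extension by a projective quotient splits. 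With those two remarks added, your argument is complete.
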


\begin{proof}
Notice that the associated category $\mathcal{A}$ is directed by Proposition 1.3, so $J$ is a well defined $A$-module. Let $\leqslant$ be a partial order on $\Lambda$ with respect to which $\mathcal{A}$ is directed, i.e., $\mathcal{A} (e_{\lambda}, e_{\mu}) = e_{\mu} A e_{\lambda} \neq 0$ implies $\lambda \leqslant \mu$. We can extend this partial order to a linear order with respect to which $\mathcal{A}$ is still directed. Indeed, let $O_1$ be the maximal objects in $\mathcal{A}$ with respect to $\leqslant$, $O_2$ be the maximal objects in $\Ob \mathcal{A} \setminus O_1$ with respect to $\leqslant$, and so on. In this way we get $\Ob \mathcal{A} = \bigsqcup _{i=1}^n O_i$. Then define arbitrary linear orders $\leqslant_i$ on $O_i$, $1 \leqslant i \leqslant n$. Take $x, y \in \Ob \mathcal{A}$ and suppose that $x \in O_i$ and $y \in O_j$, $1 \leqslant i, j \leqslant n$. Define $x \tilde {<} y$ if $i < j$ or $i = j$ but $x <_i y$. Then $\tilde{\leqslant}$ is a linear order extending $\leqslant$, and $\mathcal{A}$ is still directed with respect to $\tilde{\leqslant}$. Therefore, without loss of generality we assume that $\leqslant$ is linear.

We prove this conclusion by induction on $| \Lambda |$, the size of $\Lambda$. It holds for $|\Lambda| = 1$ since $J = 0$. Now suppose that it holds for $|\Lambda| = s$ and consider $|\Lambda| = s+1$.

Let $\lambda$ be the minimal element in $\Lambda$ with respect to $\leqslant$. Therefore, $\mathcal{A} (e_{\mu}, e_{\lambda}) = e_{\lambda} A e_{\mu} = 0$ for all $\mu \neq \lambda \in \Lambda$. Let $\preccurlyeq$ be a linear order on $\Lambda$ such that $\lambda$ is the unique maximal element with respect to $\preccurlyeq$. Consider the quotient algebra $\bar{A} = A / A e_{\lambda} A$, which is standardly stratified for all linear orders on $\Lambda \setminus \{\lambda \}$ by the previous lemma. Thus the associated category $\bar {\mathcal{A}}$ of $\bar{A}$ is directed, and we can define $\bar{J}$ as well.

For $\mu \neq \lambda$, we have Hom$_A (P_{\lambda}, P_{\mu}) \cong e_{\lambda} A e_{\mu} = 0$ since $\lambda$ is minimal with respect to $\leqslant$ and $\mathcal{A}$ is directed. Therefore, tr$ _{P_{\lambda}} (P _{\mu}) = 0$, and the corresponding indecomposable projective $\bar{A}$-module $\bar{P}_{\mu}$ is isomorphic to $P_{\mu}$. Let $J_{\mu} = J e_{\mu}$. Then we have:
\begin{align*}
J_{\mu} & = \bigoplus _{\nu \neq \mu \in \Lambda} e_{\nu} A e_{\mu} = \bigoplus _{\nu > \mu} e_{\nu} A e_{\mu} \cong \bigoplus _{\nu > \mu} \text{Hom} _A (P_{\nu}, P_{\mu}) \\
& \cong \bigoplus _{\nu > \mu} \text{Hom} _{\bar{A}} (\bar{P}_{\nu}, \bar{P}_{\mu}) \cong \bigoplus _{\nu > \mu} e_{\nu} \bar{A} e_{\mu} = \bar{J}_{\mu}
\end{align*}
By the induction hypothesis, $J_{\mu} \cong \bar{J}_{\mu}$ is a projective $\bar {A}$-module. By our choice of $\lambda$, it is actually a projective $A$-module since $e_{\lambda}$ acts on $J_{\mu}$ as 0. Therefore, $J_{\mu}$ is a projective $A$ module.

Since $J = \bigoplus _{\nu \in \Lambda} J_{\nu}$ and we have proved that all $J_{\nu}$ are projective for $\nu \neq \lambda$, it remains to prove that $J_{\lambda}$ is a projective module. To achieve this, we take the element $\mu \in \Lambda$ which is minimal in $\Lambda \setminus \{\lambda\}$ with respect to $\leqslant$. Therefore, $\mathcal{A} (e_{\nu}, e_{\mu}) = e_{\mu} A e_{\nu} \neq 0$ only if $\nu = \mu$ or $\nu = \lambda$.

Now define another linear order $\preccurlyeq '$ on $\Lambda$ such that $\mu$ is the unique maximal element with respect to $\preccurlyeq'$. Similarly, for all $\nu \neq \mu, \lambda$, we have $\bar{P}_{\nu}' \cong P_{\nu}$, where $\bar{A}' = A / A e_{\mu} A$ and $\bar{P}_{\nu}' = \bar{A}' e_{\nu}$. As before, the associated category of $\bar{A}'$ is directed so we can define $\bar{J}'$. Moreover, we have $M = \text{tr} _{P_{\mu}} (P_{\lambda}) \subseteq J_{\lambda}$. Thus we get the following commutative diagram:
\begin{equation*}
\xymatrix{ & & 0 \ar[d] & 0 \ar[d] & \\
0 \ar[r] & M \ar[r] \ar@{=}[d] & J_{\lambda} \ar[r] \ar[d] & \bar{J} _{\lambda}' \ar[r] \ar[d] & 0 \\
0 \ar[r] & M \ar[r] & P_{\lambda} \ar[r] \ar[d] & \bar{P} _{\lambda}' \ar[r] \ar[d] & 0 \\
 & & e_{\lambda} A e_{\lambda} \ar[d] \ar@{=}[r] & e_{\lambda} A e_{\lambda} \ar[d] \ar[r] & 0 \\
 & & 0 & 0 &
}
\end{equation*}
where $\bar{J}_{\lambda}' = \bar{J'} e_{\lambda}$. By the induction hypothesis on the quotient algebra $\bar {A'}$, $\bar{J}_{\lambda}'$ is a projective $\bar {A'}$-module. Clearly, each indecomposable summands of $\bar{J}_{\lambda}'$ is isomorphic to a certain $\bar{P}_{\nu}'$ with $\nu \neq \lambda, \mu$. But $\bar{P}_{\nu}' \cong P_{\nu}$.  Therefore, $\bar {J}_{\lambda}'$ is actually a projective $A$-module, and the top sequence in the above diagram splits, i.e., $J_{\lambda} \cong M \oplus \bar{J}_{\lambda}'$. But $M = \text{tr} _{P_{\mu}} (P_{\lambda})$ is also a projective $A$-module (which is actually isomorphic to a direct sum of copies of $P_{\mu}$) since $A$ is standardly stratified with respect to $\preccurlyeq'$ and $\mu$ is maximal with respect to this order. Thus $J_{\lambda}$ is a projective $A$-module as well. The proof is completed.
\end{proof}

\begin{proposition}
Suppose that the associated category $\mathcal{A}$ of $A$ is directed. If $J$ is a projective $A$-module then for an arbitrary pair $\lambda, \mu \in \Lambda$, tr$_{P_{\lambda}} (P_{\mu}) \cong P_{\lambda} ^{m_{\mu}}$ for some $m_{\mu} \geqslant 0$.
\end{proposition}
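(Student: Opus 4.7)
The plan is to settle the trivial cases using directedness, reduce to a trace computed inside the projective module $J e_\mu$, and then induct downward along the partial order on $\mu$. First I would dispose of the easy cases: if $\lambda = \mu$ then $\text{tr}_{P_\lambda}(P_\lambda) = P_\lambda \cong P_\lambda^1$, and if $\lambda \neq \mu$ but $\lambda \not> \mu$ with respect to the partial order witnessing directedness, then $\text{Hom}_A(P_\lambda, P_\mu) \cong e_\lambda A e_\mu = 0$, so $\text{tr}_{P_\lambda}(P_\mu) = 0 = P_\lambda^0$. Only the case $\mu < \lambda$ requires real work.

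In that case I would first observe that $\text{tr}_{P_\lambda}(P_\mu) = \text{tr}_{P_\lambda}(J e_\mu)$: every $A$-homomorphism $f \colon P_\lambda \to P_\mu$ satisfies $f(e_\lambda) \in e_\lambda A e_\mu \subseteq J e_\mu$ (since $\lambda \neq \mu$), and because $J e_\mu$ is a left ideal of $A$ the whole image of $f$ lies in $J e_\mu$. Next I would induct on the cardinality of $\{\nu \in \Lambda : \nu \geqslant \mu\}$. If $\mu$ is maximal then $J e_\mu = \bigoplus_{\nu > \mu} e_\nu A e_\mu = 0$ and the trace vanishes. Otherwise, the hypothesis that $J$ is projective (together with $J e_\mu$ being a direct summand of $J$) allows a decomposition $J e_\mu \cong \bigoplus_\nu P_\nu^{d_\nu}$ into indecomposable projectives. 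Directedness forces $J e_\mu \subseteq \bigoplus_{\nu > \mu} e_\nu A e_\mu$, so the top of $J e_\mu$ is supported on $\{\nu : \nu > \mu\}$, and hence $d_\nu > 0$ implies $\nu > \mu$. For each such $\nu$ the inductive hypothesis on the pair $(\lambda, \nu)$ gives $\text{tr}_{P_\lambda}(P_\nu) \cong P_\lambda^{m^{(\nu)}}$; since trace commutes with finite direct sums, I obtain $\text{tr}_{P_\lambda}(J e_\mu) \cong \bigoplus_\nu P_\lambda^{d_\nu m^{(\nu)}} \cong P_\lambda^{m_\mu}$ with $m_\mu = \sum_\nu d_\nu m^{(\nu)}$.

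The main subtlety I anticipate is essentially bookkeeping: confirming that the summands $P_\nu$ appearing in any projective decomposition of $J e_\mu$ really satisfy $\nu > \mu$, so that the downward induction descends properly. Once directedness has been invoked for that point, everything else reduces to elementary observations --- that trace commutes with direct sums, and that a left ideal absorbs images of homomorphisms landing in the ambient module.
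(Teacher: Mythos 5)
Your argument is correct and takes essentially the same route as the paper: both proofs reduce $\text{tr}_{P_\lambda}(P_\mu)$ to the trace inside the projective module $Je_\mu$ (the paper does this via the sequence $0 \to Je_\mu \to P_\mu \to e_\mu A e_\mu \to 0$ and the vanishing of the trace in the quotient), decompose $Je_\mu$ into indecomposable projectives $P_\nu$ with $\nu > \mu$ using directedness, and then conclude by induction together with the fact that trace commutes with direct sums. The only cosmetic difference is the induction bookkeeping: you induct on the cardinality of the up-set $\{\nu \in \Lambda : \nu \geqslant \mu\}$, whereas the paper first extends the partial order to a linear order and inducts on the index difference.
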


\begin{proof}
Let $\leqslant$ be a linear order on $\Lambda$ with respect to which $\mathcal{A}$ is directed. We can index all orthogonal primitive idempotents: $e_n > e_{n-1} > \ldots > e_1$. Let $P_i = A e_i$. Suppose that $e_{\lambda} = e_s$ and $e_{\mu} = e_t$. If $s < t$, tr$_{P_s} (P_t) = 0$ and the conclusion is trivially true. For $s = t$, the conclusion holds as well. So we assume $s > t$ and conduct induction on the difference $d = s -t$. Since it has been proved for $d =0$, we suppose that the conclusion holds for all $d \leqslant l$.

Now suppose $d = s - t = l+1$. Let $E_t = \mathcal{A} (e_t, e_t) \cong P_t /J_t \cong e_t A e_t$, which can be viewed as an $A$-module. We have the following exact sequence:
\begin{equation*}
\xymatrix{0 \ar[r] & J_t \ar[r] & P_t \ar[r] & E_t \ar[r] & 0}.
\end{equation*}
Since $J$ is projective, so is $J_t = Je_t$. Moreover, since $J_t = \bigoplus _{m \neq t} e_m A e_t$ and $e_m A e_t = 0$ if $m \ngtr t$, we deduce that $J_t$ has no summand isomorphic to $P_m$ with $m < t $. Therefore, $J_t \cong \bigoplus _{t+1 \leqslant i \leqslant n} P_i^{m_i}$, where $m_i$ is the multiplicity of $P_i$ in $J_t$.

The above sequence induces an exact sequence
\begin{equation*}
\xymatrix{0 \ar[r] & \bigoplus _{t+1 \leqslant i \leqslant n} \text{tr} _{P_s} (P_i)^{m_i} \ar[r] & \text{tr} _{P_s} (P_t) \ar[r] & \text{tr} _{P_s} (E_t) \ar[r] & 0}.
\end{equation*}
Clearly, $\text{tr} _{P_s} (E_t) = 0$, so
\begin{equation*}
\bigoplus _{t+1 \leqslant i \leqslant n} \text{tr} _{P_s} (P_i)^{m_i} \cong \text{tr} _{P_s} (P_t).
\end{equation*}
But for each $t+1 \leqslant i \leqslant n$, we get $s - i < s - t = l+1$. Thus by the induction hypothesis, each $\text{tr} _{P_s} (P_i)$ is a projective module isomorphic to a direct sum of $P_s$. Therefore, $\text{tr} _{P_s} (P_t)$ is a direct sum of $P_s$. The conclusion follows from induction.
\end{proof}

The condition that $\mathcal{A}$ is directed is required in the previous propositions since otherwise $J$ might not be well defined.

\begin{proposition}
If for each pair $\lambda, \mu \in \Lambda$, tr$ _{P_{\lambda}} (P_{\mu})$ is a projective module, then the associated category $\mathcal{A}$ is a directed category. Moreover, $A$ is standardly stratified for all linear orders.
\end{proposition}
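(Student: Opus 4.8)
The plan is to prove the two assertions separately.

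\emph{First assertion.} I would argue as in the proof of Proposition~1.3, but invoking the hypothesis directly instead of standard stratification. If $\mathcal{A}$ were not directed it would contain an oriented cycle, which after refinement may be taken to be $e_0 \to e_1 \to \cdots \to e_{n-1} \to e_n = e_0$ with $n \geqslant 2$ and $e_0,\dots,e_{n-1}$ pairwise distinct. Choose $s$ with $\dim_{\Bbbk} P_s$ maximal among the $P_i$ occurring, and let $e_p$ be the predecessor of $e_s$ in the cycle, so $e_p \neq e_s$ and $\mathrm{tr}_{P_s}(P_p) \neq 0$. Since $A$ is basic, the top of a trace $\mathrm{tr}_{P_s}(N) = A e_s N$ is a direct sum of copies of $S_s$, so the projective module $\mathrm{tr}_{P_s}(P_p)$ is isomorphic to $P_s^m$ with $m \geqslant 1$, and it lies in $\rad P_p$ because $e_p \neq e_s$. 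Then $\dim_{\Bbbk} P_s \leqslant \dim_{\Bbbk}\mathrm{tr}_{P_s}(P_p) \leqslant \dim_{\Bbbk}\rad P_p < \dim_{\Bbbk} P_p \leqslant \dim_{\Bbbk} P_s$, a contradiction; hence $\mathcal{A}$ is directed, and I fix a linear order $\leqslant$ on $\Lambda$ refining the directedness order. The same top computation shows $\mathrm{tr}_{P_\mu}(P_\lambda) \cong P_\mu^m$ for all $\lambda,\mu$ (some $m \geqslant 0$), and since every homomorphism $P_\mu \to P_\lambda$ factors through $\mathrm{tr}_{P_\mu}(P_\lambda)$ we get $e_\mu A e_\lambda \cong \mathrm{Hom}_A(P_\mu,P_\mu^m) \cong (e_\mu A e_\mu)^m$, so $e_\mu A e_\lambda$ is free over $e_\mu A e_\mu$ and $A$ is standardly stratified for $\leqslant$ by Proposition~1.4(2).

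\emph{Second assertion.} Fix an arbitrary linear order $\preccurlyeq$. I would show, by downward induction on $\mu$ with respect to $\preccurlyeq$, that $P_\mu$ admits a ${}_{\preccurlyeq}\Delta$-filtration whose top factor is ${}_{\preccurlyeq}\Delta_\mu$ and whose remaining factors are ${}_{\preccurlyeq}\Delta_\sigma$ with $\sigma \succ \mu$; the base case is the $\preccurlyeq$-maximal vertex, where ${}_{\preccurlyeq}\Delta_\mu = P_\mu$, and the statement for all $\mu$ yields ${}_A A = \bigoplus_\mu P_\mu \in \mathcal{F}({}_{\preccurlyeq}\Delta)$, so that $A$ is standardly stratified for $\preccurlyeq$ by Theorem~3.4 of \cite{Webb}. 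For the inductive step, the kernel of $P_\mu \twoheadrightarrow {}_{\preccurlyeq}\Delta_\mu$ is $K_\mu = \sum_{\nu \succ \mu}\mathrm{tr}_{P_\nu}(P_\mu) = A e_T A e_\mu$ with $e_T = \sum_{\nu \succ \mu} e_\nu$; its top is a sum of copies of the $S_\nu$ with $\nu \succ \mu$ (again the trace computation), so once $K_\mu$ is known to be projective it is isomorphic to $\bigoplus_{\nu \succ \mu} P_\nu^{a_\nu}$. Each $P_\nu$ with $\nu \succ \mu$ carries by the inductive hypothesis a ${}_{\preccurlyeq}\Delta$-filtration with factors ${}_{\preccurlyeq}\Delta_\sigma$, $\sigma \succeq \nu \succ \mu$, hence so does $K_\mu$, and splicing this with $0 \to K_\mu \to P_\mu \to {}_{\preccurlyeq}\Delta_\mu \to 0$ produces the required filtration of $P_\mu$.

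\emph{The main obstacle} is therefore the claim that $A e_S A e_\mu$ is a projective $A$-module for every $S \subseteq \Lambda$ and every $\mu \in \Lambda$. I would prove this by induction on $|\Lambda|$, simultaneously for all $S$ and $\mu$. Let $\omega$ be the $\leqslant$-minimal vertex; then $e_\omega A e_\nu = 0$ for $\nu \neq \omega$, so $\mathrm{tr}_{P_\omega}(P_\nu) = 0$ for $\nu \neq \omega$, the quotient $\bar A = A/Ae_\omega A$ again satisfies the hypotheses of the Proposition, and for $\lambda,\mu \neq \omega$ we have $\bar P_\nu \cong P_\nu$, $\mathrm{Hom}_{\bar A}(\bar P_\lambda,\bar P_\mu) \cong \mathrm{Hom}_A(P_\lambda,P_\mu)$, and $A e_S A e_\mu = A e_{S\setminus\{\omega\}} A e_\mu = \bar A\,\bar e_{S\setminus\{\omega\}}\,\bar A\,\bar e_\mu$, which is $\bar A$-projective by induction and hence $A$-projective since $\bar P_\nu \cong P_\nu$. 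If $\mu = \omega$ and $\omega \in S$, then $A e_S A e_\omega = P_\omega$. The one delicate case is $\mu = \omega$ with $\omega \notin S$: here $A e_S A e_\omega = \sum_{\nu \in S}\mathrm{tr}_{P_\nu}(P_\omega)$ is a sum of the projective submodules $\mathrm{tr}_{P_\nu}(P_\omega) \cong P_\nu^{m_\nu}$ of $P_\omega$, all annihilated by $A e_\omega A$ and contained in $\rad P_\omega$. I expect to handle it by a sub-induction on $|S|$, peeling off $\mathrm{tr}_{P_{\nu_0}}(P_\omega)$ for $\nu_0$ the $\leqslant$-minimal element of $S$; since $e_{\nu_0} A e_\nu = 0$ for the other $\nu \in S$, there is no nonzero homomorphism from $P_{\nu_0}$ into $M := \sum_{\nu \in S\setminus\{\nu_0\}}\mathrm{tr}_{P_\nu}(P_\omega)$ (projective by sub-induction), and a commutative-diagram argument in the spirit of the proof of Proposition~1.6 should exhibit $A e_S A e_\omega = M \oplus \mathrm{tr}_{P_{\nu_0}}(P_\omega)$ by splitting the relevant short exact sequence. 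This splitting is the crux, and it appears to be the only point where the full strength of the hypothesis (all traces projective, not merely $J$ projective) is genuinely used.
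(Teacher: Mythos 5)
Your first assertion and your overall reduction are fine: the dimension argument for directedness is essentially the paper's Proposition 1.3, and granting your ``multi-trace'' claim (that $Ae_SAe_\mu$ is projective for every subset $S\subseteq\Lambda$), the downward induction building ${}_{\preccurlyeq}\Delta$-filtrations of the $P_\mu$ is correct. The gap is exactly at the point you flag as the crux, and it is a real one: the splitting $Ae_SAe_\omega = M\oplus \mathrm{tr}_{P_{\nu_0}}(P_\omega)$, with $\nu_0$ the $\leqslant$-minimal element of $S$, is false in general. Take $A$ to be the path algebra of $1\to 2\to 3$ (hereditary, so every trace is projective and the hypothesis holds), $\omega=1$, $S=\{2,3\}$, $\nu_0=2$. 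Then $T=\mathrm{tr}_{P_2}(P_1)=\rad P_1\cong P_2$ already contains $M=\mathrm{tr}_{P_3}(P_1)\cong S_3$, so the sum $\sum_{\nu\in S}\mathrm{tr}_{P_\nu}(P_1)$ is not direct, and the sequence you would want to split, $0\to M\to Ae_SAe_1\to Ae_SAe_1/M\to 0$, is $0\to S_3\to P_2\to S_2\to 0$, which does not split. The observation $\mathrm{Hom}_A(P_{\nu_0},M)=0$ gives no control over $M\cap T$: a submodule of the trace $T$ need not be an image of a map from $P_{\nu_0}$, and traces of different projectives inside $P_\omega$ can be nested. (In the example $Ae_SAe_1\cong P_2$ is still projective, but not for the reason you give; and a variant peeling at the $\leqslant$-maximal element of $S$ runs into the same non-splitting, e.g.\ $0\to S_3\to P_2\to S_2\to 0$ again.)

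For comparison, the paper never needs sums of traces. It argues by induction on $|\Lambda|$: for an arbitrary linear order $\preccurlyeq$ with maximal element $\lambda$, the ideal $Ae_\lambda A=\bigoplus_\mu\mathrm{tr}_{P_\lambda}(P_\mu)$ is projective directly from the hypothesis (this is a genuine direct sum, over the idempotents $e_\mu$), and the whole work goes into showing that the quotient $\bar A=A/Ae_\lambda A$ inherits the property that all single traces are projective; this is done with a $3\times 3$ commutative diagram comparing $\mathrm{tr}_{P_\mu}(P_\nu)$, $\mathrm{tr}_{P_\lambda}(P_\mu)$ and $\mathrm{tr}_{P_\lambda}(P_\nu)$, all of which are projective by hypothesis. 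If you want to keep your architecture, you would have to prove the multi-trace projectivity by some such quotient-algebra argument rather than by splitting off one trace at a time; in fact it is easier to obtain it as a corollary of the proposition (proved the paper's way) than as a lemma on the way to it.
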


\begin{proof}
The first statement comes from the proof of Proposition 1.3. Actually, if $\mathcal{A}$ is not directed, in that proof we find some $P_{\lambda}$ and $P_{\mu}$ such that tr$_{P_{\lambda}} (P_{\mu}) \neq 0$ is not projective.

To prove the second statement, we use induction on $|\Lambda|$. It is clearly true if $|\Lambda| = 1$. Suppose that the conclusion holds for $|\Lambda| \leqslant l$ and suppose that $|\Lambda| = l+1$. Let $\preccurlyeq$ be an arbitrary linear order on $\Lambda$ and take a maximal element $\lambda$ with respect to this linear order. Since tr$ _{P_{\lambda}} (P_{\mu})$ is a projective $A$-module for all $\mu \in \Lambda$ by the given condition, it is enough to show that the quotient algebra $\bar {A} =  A /A e_{\lambda} A$ has the same property. That is, tr$ _{\bar{P_{\mu}}} (\bar{P_{\nu}})$ is a projective $\bar {A}$-module for all $\mu, \nu \in \Lambda \setminus \{ \lambda \}$, where $\bar{P_{\mu}} = \bar {A} e_{\mu}$ and $\bar{P_{\nu}}$ is defined similarly. Then the conclusion will follow from induction hypothesis.

Let $\leqslant$ be a linear order on $\Lambda$ with respect to which $\mathcal{A}$ is directed. It restriction on $\Lambda \setminus \{ \lambda \}$ gives a linear order with respect to which $\bar{\mathcal{A}}$, the associated category of $\bar{A}$, is directed. Consider tr$ _{\bar{P_{\mu}}} (\bar{P_{\nu}})$. If $\mu \leqslant \nu$, this trace is 0 or $\bar{P_{\nu}}$. Thus we assume that $\mu > \nu$. Since tr$ _{P_{\mu}} (P_{\nu})$ is projective, it is isomorphic to a direct sum of of $P_{\mu}$, and we get the following exact sequence:
\begin{equation*}
\xymatrix{0 \ar[r] & \text{tr} _{P_{\mu}} (P_{\nu}) \cong P_{\mu}^m \ar[r] & P_{\nu} \ar[r] & M \ar[r] & 0}
\end{equation*}
with $e_{\mu} M = 0$.

Since tr$ _{P_{\lambda}} (P_{\mu})$ and tr$ _{P_{\lambda}} (P_{\nu})$ are also isomorphic to direct sums of $P_{\lambda}$, by considering the traces of $P_{\lambda}$ in the modules in the above sequence, we get a commutative diagram as follows:
\begin{equation*}
\xymatrix{ & 0 \ar[d] & 0 \ar[d] & 0 \ar[d] & \\
0 \ar[r] & P_{\lambda}^s \cong \text{tr} _{P_{\lambda}} (P_{\mu}^m) \ar[r] \ar[d] & P_{\lambda}^t \cong \text{tr} _{P_{\lambda}} (P_{\nu}) \ar[r] \ar[d] & P_{\lambda}^{t-s} \cong \text{tr} _{P_{\lambda}} (M) \ar[r] \ar[d] & 0 \\
0 \ar[r] & P_{\mu}^m \ar[r] \ar[d] & P_{\nu} \ar[r] \ar[d] & M \ar[r] \ar[d] & 0 \\
0 \ar[r] & \bar{P_{\mu}}^m \ar[r] \ar[d] & \bar{P_{\nu}} \ar[r] \ar[d] & \bar{M} \ar[r] \ar[d] & 0 \\
 & 0 & 0 & 0 &.}
\end{equation*}
Since $e_{\mu} M =0$, we get $e_{\mu} \bar{M} =0$ as well. Thus from the bottom row we conclude that tr$ _{\bar {P_{\mu}}} (\bar {P_{\nu}}) \cong \bar{P_{\mu}}^m$ is projective. This finishes the proof.
\end{proof}

Now we can prove the first theorem.

\begin{proof}
$(1) \Rightarrow (2)$: by Propositions 1.3 and 1.6.

$(2) \Rightarrow (3)$: by Proposition 1.7.

$(3) \Rightarrow (1)$: by Proposition 1.8.

$(2) \Rightarrow (4)$: By the assumption, there is a linear order $\leqslant$ on $\Lambda$ such that $\mathcal{A}$ is directed with respect to it. By Proposition 1.4, we know that $_{\leqslant} \Delta \cong A_0 = \bigoplus _{\lambda \in \Lambda} e_{\lambda} A e_{\lambda}$. Thus the projective dimension of $_{\leqslant} \Delta$ is at most 1 since we have the exact sequence $0 \rightarrow J \rightarrow A \rightarrow A_0 \rightarrow 0$ and $J$ is projective. Therefore, every $M \in \mathcal{F} (_{\leqslant} \Delta)$ has projective dimension at most 1.

Note that $\mathcal{F} (_{\leqslant} \Delta)$ is closed under cokernels of monomorphisms by Proposition 1.4. Let $\preccurlyeq$ be an arbitrary linear order on $\Lambda$. By the second statement of Theorem 0.3 (which will be proved later), $\mathcal{F} (_{\preccurlyeq} \Delta) \subseteq \mathcal{F} (_{\leqslant} \Delta)$. Thus every $M \in \mathcal{F} (_{\preccurlyeq} \Delta) \subseteq \mathcal{F} (_{\leqslant} \Delta)$ has projective dimension at most 1.

$(4) \Rightarrow (3)$: Take an arbitrary pair $\lambda, \mu \in \Lambda$. We want to show that tr$ _{P_{\lambda}} (P_{\mu})$ is a projective $A$-module. Clearly, there exists a linear order $\preccurlyeq$ on $\Lambda$ such that $\lambda$ is the maximal element in $\Lambda$ and $\mu$ is the maximal element in $\Lambda \setminus \{ \lambda \}$. Therefore, by the definition, we have a short exact sequence
\begin{equation*}
\xymatrix{ 0 \ar[r] & \text{tr} _{P_{\lambda}} (P_{\mu}) \ar[r] & P_{\mu} \ar[r] & _{\preccurlyeq} \Delta_{\mu} \ar[r] & 0}.
\end{equation*}
Clearly, $_{\preccurlyeq} \Delta_{\mu} \in \mathcal{F} (_{\preccurlyeq} \Delta)$. Therefore, it has projective dimension at most 1, which means that $\text{tr} _{P_{\lambda}} (P_{\mu})$ is projective.
\end{proof}

This immediately gives us the following characterization of algebras properly stratified for all linear orders.

\begin{corollary}
Let $A, \mathcal{A}$ and $J$ be as in the previous theorem. Then $A$ is properly stratified for all linear orders if and only if $\mathcal{A}$ is a directed category and $J$ is a left and right projective $A$-module.
\end{corollary}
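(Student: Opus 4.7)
The plan is to reduce the corollary to the first theorem by invoking Proposition 1.2, which characterizes proper stratification as simultaneous left and right standard stratification. Thus ``properly stratified for all linear orders'' is equivalent to saying that both $A$ and $A^{op}$ are standardly stratified for all linear orders, and I would apply the equivalence (1) $\Leftrightarrow$ (2) of Theorem 0.1 to each of $A$ and $A^{op}$ separately.

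For the forward direction, assume $A$ is properly stratified for all linear orders. Proposition 1.2 gives that both $A$ and $A^{op}$ are standardly stratified with respect to every linear order $\preccurlyeq$ on $\Lambda$. Applying Theorem 0.1 to $A$ yields that $\mathcal{A}$ is directed and that $J$ is projective as a left $A$-module. Applying Theorem 0.1 to $A^{op}$ yields that the associated category $\mathcal{A}^{op}$ of $A^{op}$ is directed (which is automatic once $\mathcal{A}$ is, by reversing the partial order) and that the analogous ideal $J^{op} = \bigoplus_{\lambda \neq \mu} e_{\lambda} A e_{\mu}$, viewed as a left $A^{op}$-module, is projective; but this is exactly the statement that $J$ is a projective right $A$-module.

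For the backward direction, assume $\mathcal{A}$ is directed and $J$ is both left and right projective. The left-projectivity of $J$ together with directedness of $\mathcal{A}$ gives, via (2) $\Rightarrow$ (1) of Theorem 0.1, that $A$ is standardly stratified for all linear orders. Reversing the partial order on $\Lambda$ witnessing directedness of $\mathcal{A}$ provides a partial order witnessing directedness of $\mathcal{A}^{op}$, and the right-projectivity of $J$ translates to left-projectivity of the corresponding ideal in $A^{op}$, so by Theorem 0.1 again $A^{op}$ is standardly stratified for all linear orders. Proposition 1.2 then delivers that $A$ is properly stratified for all linear orders.

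There is no real obstacle here; the only point requiring a little care is the mild bookkeeping of transferring ``directed category'' and the ideal $J$ between $A$ and $A^{op}$, which amounts to reversing arrows and swapping left/right module structures. Once this translation is made explicit, the corollary follows directly from Theorem 0.1 and Proposition 1.2.
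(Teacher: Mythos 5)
Your argument is correct and is exactly the route the paper takes: its proof of this corollary is simply the citation of Proposition 1.2 together with Theorem 0.1, which you have spelled out with the (routine) left/right and opposite-algebra bookkeeping made explicit. Nothing further is needed.
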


\begin{proof}
By Proposition 1.2 and Theorem 0.1.
\end{proof}

The following example describes an algebra which is standardly stratified (but not properly stratified) for all linear orders.

\begin{example}
Let $A$ be the algebra defined by the following quiver with relations: $\delta^2 = \beta \delta =0$, $\alpha \delta = \gamma \beta$.
\begin{equation*}
\xymatrix { & y \ar[dr] ^{\gamma} & \\
x \ar@(ld, lu)|[] {\delta} \ar[ur] ^{\beta} \ar[rr] ^{\alpha} & & z}
\end{equation*}
The indecomposable left projective modules are described as follows:
\begin{equation*}
\xymatrix{ & x \ar[dl] _{\alpha} \ar[dr]_{\delta} \ar[d]_{\beta} & \\
z & y \ar[d]_{\gamma} & x \ar[dl] _{\alpha} \\
 & z & }
\qquad \xymatrix{ y \ar[d] _{\gamma} \\ z}
\qquad {z}
\end{equation*}
It is easy to see that $J \cong P_y \oplus P_z \oplus P_z$ is a left projective $A$-module, so $A$ is standardly stratified for all linear orders.

On the other hand, the indecomposable right projective modules have the following structures:
\begin{equation*}
\xymatrix{ x \ar[d] _{\delta} \\ x}
\qquad \xymatrix{ y \ar[d] _{\beta} \\ x}
\qquad \xymatrix{ & z \ar[dl] _{\alpha} \ar[dr] ^{\gamma} & \\
x \ar[dr] _{\delta} &  & y \ar[dl] ^{\beta} \\
 & x & }
\end{equation*}
Thus $J$ is not a right projective $A$-module, and hence $A$ is not properly stratified for all linear orders.
\end{example}

\section{The classification}

Throughout this section let $A$ be a finite-dimensional basic algebra with a chosen set of orthogonal primitive idempotents $\{ e _{\lambda} \} _{\lambda \in \Lambda}$ such that $\sum _{\lambda \in \Lambda} e_{\lambda} = 1$. We also suppose that the associated category $\mathcal{A}$ is directed. Thus we can define $A_0$ and $J$ as before, and consider its associated graded algebra $\check{A} = \bigoplus _{i \geqslant 0} J^i / J^{i+1}$, where we set $J^0 = A$. Correspondingly, for a finitely generated $A$-module $M$, its associated graded $\check{A}$-module $\check{M} = \bigoplus _{i \geqslant 0} J^iM/ J^{i+1} M$. Clearly, we have $\check{A}_i \cdot \check{A}_j = \check{A} _{i+j}$ for $i, j \geqslant 0$.

\begin{lemma}
Let $M$ be an $A$-module and $\check{M}$ be the associated graded $\check{A}$-module. Then $\check{M}$ is a graded projective $\check{A}$-module if and only if $M$ is a projective $A$-module.
\end{lemma}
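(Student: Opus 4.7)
The plan is to prove the two implications separately, relying throughout on the facts that $\check{M}$ is non-negatively graded with $\check{M}_0 = M/JM$, and that $J^i M = J \cdot J^{i-1} M$ for $i \geqslant 1$ forces $\check{M}$ to be generated over $\check{A}$ by its degree-zero piece $\check{M}_0$.

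For the direction $(\Leftarrow)$, additivity of the functor $M \mapsto \check{M}$ reduces the problem to $M = P_\lambda = A e_\lambda$. A short computation shows $J^i P_\lambda = J^i e_\lambda$ and $J^i e_\lambda \cap J^{i+1} = J^{i+1} e_\lambda$ (every element of $J^i e_\lambda$ is fixed by right multiplication by the idempotent $e_\lambda$), which promotes the obvious map $\check{A} e_\lambda \to \check{P}_\lambda$, $[x] \mapsto [x e_\lambda]$, into a graded isomorphism. Hence $\check{M}$ is graded free over $\check{A}$ whenever $M$ is projective.

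For $(\Rightarrow)$, assume $\check{M}$ is graded projective. Choose $A_0$-module generators $\bar{m}_j \in e_{\lambda_j}(M/JM)$ of $M/JM$; the induced graded surjection $\bigoplus_j \check{A} e_{\lambda_j} \twoheadrightarrow \check{M}$ splits by graded projectivity, so $\check{M}_0 = M/JM$ is an $A_0$-direct summand of $\bigoplus_j e_{\lambda_j} A e_{\lambda_j}$. Since $A_0$ is a product of local algebras, this forces $M/JM \cong \bigoplus_\lambda (e_\lambda A e_\lambda)^{n_\lambda}$ for some integers $n_\lambda \geqslant 0$. Lift a free $e_\lambda A e_\lambda$-basis of $e_\lambda(M/JM)$ to elements of $e_\lambda M$ for each $\lambda$; these assemble into a homomorphism $\varphi : P = \bigoplus_\lambda P_\lambda^{n_\lambda} \to M$. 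Since $J \subseteq \rad A$ and $M$ is finitely generated, Nakayama's lemma forces $\varphi$ to be surjective.

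Passing to associated gradeds produces a graded surjection $\check{\varphi} : \check{P} \twoheadrightarrow \check{M}$ whose degree-zero component is an isomorphism by construction. Graded projectivity of $\check{M}$ splits $\check{\varphi}$ as $\check{P} \cong \check{M} \oplus \check{K}$ with $\check{K} = \ker \check{\varphi}$; being a graded direct summand of $\check{P}$ (which is generated in degree $0$), the module $\check{K}$ is generated in degree $0$ as well, but $\check{K}_0 = 0$, so $\check{K} = 0$. Comparing total $\Bbbk$-dimensions gives $\dim_{\Bbbk} P = \dim_{\Bbbk} \check{P} = \dim_{\Bbbk} \check{M} = \dim_{\Bbbk} M$, so the surjection $\varphi$ is an isomorphism and $M \cong P$ is projective. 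The main subtle point is the observation that a graded direct summand of a module generated in degree zero is itself generated in degree zero, which is precisely what lets us conclude $\check{K} = 0$ from $\check{K}_0 = 0$.
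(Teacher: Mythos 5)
Your proof is correct, but it takes a noticeably different route from the paper's. The paper reduces to $M$ indecomposable and argues by contradiction: it takes a projective cover $p\colon P\to M$, splits the induced graded sequence $0\to\check L\to\check P\to\check M\to 0$ using projectivity of $\check M$, restricts to degree zero, and contradicts $\top P\cong\top M$ because the splitting forces an extra summand $\top\check L_0$ in $\top P$. You instead argue directly: from a graded splitting of $\bigoplus_j\check A e_{\lambda_j}\twoheadrightarrow\check M$ you first deduce that $M/JM$ is a projective $A_0$-module, use this to build a surjection $\varphi\colon P=\bigoplus_\lambda P_\lambda^{n_\lambda}\to M$ whose degree-zero graded component is an isomorphism, and then kill $\ker\check\varphi$ by the observation that a graded direct summand of a module generated in degree $0$ is itself generated in degree $0$, finishing with a dimension count. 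Both arguments hinge on the same two ingredients --- $J\subseteq\rad A$ (nilpotency of $J$, coming from directedness of $\mathcal A$) and splitting a graded surjection onto $\check M$ --- but yours is constructive rather than by contradiction, avoids both the reduction to indecomposables and the minimality of the projective cover, and yields the extra information that $M/JM\cong\bigoplus_\lambda(e_\lambda Ae_\lambda)^{n_\lambda}$; the paper's version is shorter because the projective cover supplies $\top P\cong\top M$ for free, whereas your degree-$0$-generation lemma is exactly the point the paper glosses over when it asserts (and needs) that the kernel $\check L$ is generated in degree $0$, so your write-up actually makes that step explicit.
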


\begin{proof}
Without loss of generality we can assume that $M$ is indecomposable. Since the free module $_AA$ is sent to the graded free module $_{\check{A}} \check{A}$ by the grading process, we know that $\check{M}$ is a graded projective $\check{A}$-module generated in degree 0 if $M$ is a projective $A$-module. Now suppose that $M$ is not a projective $A$-module but $\check{M}$ is a projective $\check{A}$-module, and we want to get a contradiction.

Let $p: P \rightarrow M$ be a projective covering map of $M$. Then $\dim _{\Bbbk} M < \dim _{\Bbbk} P$ since $M$ is not projective. Moreover, $\top P = P / \rad P \cong \top M = M / \rad M$. The surjective map $p$ gives a graded surjective map $\check{p}: \check{P} \rightarrow \check{M}$. Since $\check{M}$ is supposed to be projective and $\check{P}$ is projective, we get a splitting exact sequence of graded projective $\check{A}$-modules generated in degree 0 as follows:
\begin{equation*}
\xymatrix{0 \ar[r] & \check{L} \ar[r] & \check{P} \ar[r] & \check{M} \ar[r] & 0}.
\end{equation*}
Notice that $\check{L} \neq 0$ since $\dim_{\Bbbk} \check{M} = \dim_{\Bbbk} M < \dim_{\Bbbk} P = \dim_{\Bbbk} \check{P}$. Consider the degree 0 parts. We have a splitting sequence of $A_0$-modules
\begin{equation*}
\xymatrix{0 \ar[r] & \check{L}_0 \ar[r] & \check{P}_0 \ar[r] & \check{M}_0 \ar[r] & 0}
\end{equation*}
which by definition is isomorphic to
\begin{equation*}
\xymatrix{0 \ar[r] & \check{L}_0 \ar[r] & P/JP \ar[r] & M/JM \ar[r] & 0}.
\end{equation*}
View them as $A$-modules on which $J$ acts as 0. Observe that $J$ is contained in the radical of $A$. Thus $\top P \cong \top (P/JP)$ and $\top M \cong \top (M/JM)$. But the above sequence splits, hence $\top P \cong \top \check{L}_0 \oplus \top M$, contradicting the fact that $\top P \cong \top M$. This finishes the proof.
\end{proof}

The above lemma still holds if we replace left modules by right modules. It immediately implies the following result, which is a part of Theorem 0.2:

\begin{proposition}
Let $A$ be a finite-dimensional basic algebra whose associated category is directed. Let $\check{A}$ be the associated graded algebra. Then $A$ is standardly (resp., properly) stratified for all linear orders if and only if so is $\check{A}$.
\end{proposition}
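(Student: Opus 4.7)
The plan is to reduce this equivalence to Theorem 0.1 (and, in the properly stratified case, Corollary 1.10), which already characterize algebras stratified for all linear orders as those with a directed associated category whose off-diagonal ideal is projective. The first step is to identify the analogue of $J$ for $\check{A}$ with the associated graded module of $J$ itself. I would verify that $\check{\mathcal{A}}$ is directed with respect to the same partial order on $\Lambda$: for $\lambda \neq \mu$ the space $e_\mu \check{A} e_\lambda = \bigoplus_{i \geqslant 0} e_\mu (J^i/J^{i+1}) e_\lambda$ is a subquotient of $e_\mu A e_\lambda$ under the $J$-adic filtration, so vanishes whenever the latter does. Moreover, since $\mathcal{A}$ is directed, any element of $e_\lambda J^i e_\lambda$ for $i \geqslant 1$ is a sum of compositions $e_\lambda \to e_{\nu_1} \to \ldots \to e_{\nu_{i-1}} \to e_\lambda$ with at least one strict step, which would force $\lambda < \lambda$; hence $e_\lambda J^i e_\lambda = 0$ for $i \geqslant 1$, so $e_\lambda \check{A} e_\lambda = e_\lambda A e_\lambda$. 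Consequently the two-sided ideal $\check{J} := \bigoplus_{\lambda \neq \mu} e_\mu \check{A} e_\lambda$ of $\check{A}$ coincides exactly with $\bigoplus_{i \geqslant 1} J^i/J^{i+1}$, which is the graded $\check{A}$-module obtained from $J$ by the grading process.

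With this identification in hand, the equivalence becomes mechanical. Theorem 0.1 applied to $A$ says that $A$ is standardly stratified for all linear orders if and only if $J$ is projective over $A$; applied to $\check{A}$ it says that $\check{A}$ is standardly stratified for all linear orders if and only if $\check{J}$ is projective over $\check{A}$. Since $\check{J}$ is generated in positive degree, graded projectivity and ordinary projectivity coincide, so Lemma 2.1 with $M = J$ supplies the missing equivalence $J$ projective $\Leftrightarrow$ $\check{J}$ projective, finishing the standardly stratified case.

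For the parenthetical "properly stratified" version, I would repeat the argument verbatim but replace Theorem 0.1 by Corollary 1.10, which adds the requirement that $J$ be projective both as a left and as a right $A$-module. The decomposition $\check{J} = \bigoplus_{i \geqslant 1} J^i/J^{i+1}$ is symmetric in sides, and the author has already noted that Lemma 2.1 remains valid for right modules; so applying both versions of the lemma to $J$ yields the desired statement.

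The main obstacle I anticipate is the identification step in the first paragraph: specifically, showing that the directedness of $\mathcal{A}$ forces $e_\lambda J^i e_\lambda = 0$ for $i \geqslant 1$, so that the "$J$-ideal" of $\check{A}$ used in Theorem 0.1 agrees on the nose with the associated graded of $J$. Without this identification one could not directly transfer the projectivity criterion between $A$ and $\check{A}$ via Lemma 2.1; everything else is a bookkeeping application of results already established.
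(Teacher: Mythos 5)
Your proposal is correct and follows essentially the same route as the paper: both reduce the statement to the characterization in Theorem 0.1 (and Corollary 1.10 for the properly stratified case) and transfer the projectivity of $J$ to $\check{J}$ via Lemma 2.1 and its right-module analogue. The identification $\check{J} = \bigoplus_{i \geqslant 1} J^i/J^{i+1}$ with the associated graded module of $J$, and the directedness of $\check{\mathcal{A}}$, which you verify explicitly using $e_{\lambda} J^i e_{\lambda} = 0$, are details the paper's brief proof takes for granted, so you have simply made the same argument more explicit.
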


\begin{proof}
The algebra $A$ is standardly stratified for all linear orders if and only if the associated category $\mathcal{A}$ is a directed category and $J$ is a projective $A$-module. This happens if and only if the graded category $\check {\mathcal{A}}$ is a directed category and $\check{J}$ is a projective $\check{A}$-module by the previous lemma, and if and only if $\check{A}$ is standardly stratified for all linear orders.
\end{proof}

In general $\check{A}$ (when viewed as a non-graded algebra) is not isomorphic to $A$, as shown by the following example.

\begin{example}
Let $A$ be the algebra described in Example 1.10, where we proved that it is standardly stratified for all linear orders. It is easy to see $J/J^2 = \langle \bar{\alpha}, \bar{\beta}, \bar{\gamma} \rangle$, $J^2 / J^3 = \langle \bar{\gamma} \bar{\beta} \rangle$ and $J^3 = 0$. Therefore, $\check{A}$ is defined by the following quiver with relations $\bar{\alpha} \delta = \bar{\beta} \bar{\delta} = 0$, which is not isomorphic to $A$.
\begin{equation*}
\xymatrix { & y \ar[dr] ^{\bar{\gamma}} & \\
x \ar@(ld, lu)|[] {\delta} \ar[ur] ^{\bar{\beta}} \ar[rr] ^{\bar{\alpha}} & & z}
\end{equation*}
The indecomposable left projective $\check{A}$-modules have the following structures:
\begin{equation*}
\xymatrix{ & x \ar[dr] _{\bar{\delta}} \ar[dl] _{\bar{\alpha}} \ar[d] _{\bar{\beta}} & \\
z & y \ar[d] _{\bar{\gamma}} & x \\
 & z & }
\qquad \xymatrix{ y \ar[d] _{\bar{\gamma}} \\ z}
\qquad {z}
\end{equation*}
It still holds that $\check{J} \cong P_y \oplus P_z \oplus P_z$, so $\check{A}$ is standardly stratified for all linear orders.

The indecomposable right projective $\check{A}$-modules are as follows:
\begin{equation*}
\xymatrix{ x \ar[d] _{\bar{\delta}} \\ x} \qquad \xymatrix{ y \ar[d] _{\bar{\beta}} \\ x} \qquad \xymatrix{ & z \ar[dl] _{\bar{\gamma}} \ar[dr] _{\bar{\alpha}} & \\ y \ar[d] _{\bar{\beta}} & & x \\ x}
\end{equation*}
We deduce that $\check{A}$ is not properly stratified for all linear orders since $\check{J}$ is not a right projective module.
\end{example}

Let $X$ be an $(A_0, A_0)$-bimodule. We denote the tensor algebra generated by $A_0$ and $X$ to be $A_0[X]$. That is, $A_0[X] = A_0 \oplus X \oplus (X \otimes _{A_0} X) \oplus \ldots$. With this notation, we have:

\begin{lemma}
Let $A = \bigoplus _{i \geqslant 1} A_i$ be a finite-dimensional graded algebra with $A_i \cdot A_j = A_{i+j}$, $i, j \geqslant 0$. Then $J = \bigoplus _{i \geqslant 1} A_i$ is a projective $A$-module if and only if $A \cong A_0 [A_1]$, and $A_1$ is a projective $A_0$-module.
\end{lemma}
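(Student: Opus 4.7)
The plan is to analyze the graded multiplication map
\[
\mu \colon A \otimes_{A_0} A_1 \longrightarrow J, \qquad a \otimes x \mapsto ax,
\]
which is a surjection of left $A$-modules because the hypothesis $A_i \cdot A_j = A_{i+j}$ forces $A_{k-1} \cdot A_1 = A_k$ for every $k \geq 1$. Both directions of the lemma will reduce to showing that $\mu$ is an isomorphism.

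For the ``if'' direction, assume $A \cong A_0[A_1]$ with $A_1$ a projective left $A_0$-module. Then in degree $k$ the map $\mu$ becomes the canonical identification $A_1^{\otimes (k-1)} \otimes_{A_0} A_1 \cong A_1^{\otimes k}$, so $J \cong A \otimes_{A_0} A_1$ as left $A$-modules. Choosing a splitting $A_1 \oplus Q \cong A_0^n$ over $A_0$ and tensoring over $A_0$ with $A$ exhibits $J$ as a direct summand of $A^n$, hence projective.

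For the ``only if'' direction, assume $J$ is a projective left $A$-module. I would first apply the right-exact functor $A_0 \otimes_A -$ to the short exact sequence $0 \to J \to A \to A_0 \to 0$ to get $A_0 \otimes_A J \cong J/J^2 = A_1$; combining this with any splitting $J \oplus J' \cong A^n$ exhibits $A_1$ as a direct summand of $A_0^n$, so $A_1$ is left $A_0$-projective. Consequently $A \otimes_{A_0} A_1$ is also a projective left $A$-module, and the surjection $\mu$ splits, so $\ker \mu$ is a direct summand of $A \otimes_{A_0} A_1$. A direct check shows $A_0 \otimes_A \mu$ is the identity on $A_1$, whence $\ker \mu = J \cdot \ker \mu$; since $A$ is positively graded and finite-dimensional, $J$ is nilpotent and contained in the Jacobson radical, so Nakayama forces $\ker \mu = 0$. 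Finally, because $A_1$ generates $A$ over $A_0$ there is a canonical graded surjection $A_0[A_1] \twoheadrightarrow A$; iterating the isomorphism $A_{k-1} \otimes_{A_0} A_1 \cong A_k$ now available from $\mu$ yields $A_k \cong A_1^{\otimes k}$ in every degree, so this surjection is an isomorphism of graded algebras.

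The main obstacle is the Nakayama step establishing the injectivity of $\mu$: one has to check carefully that $A_0 \otimes_A \mu$ reduces to the identity on $A_1$ under the identifications above, and that $J$ lies in the radical. Everything else is routine bookkeeping with induced modules and tensor products.
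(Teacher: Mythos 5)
Your proof is correct, and both directions ultimately rest on the same identification $J \cong A \otimes_{A_0} A_1$ that drives the paper's argument; the difference lies in how you obtain it in the ``only if'' direction. The paper asserts that, $J$ being projective and generated in degree $1$, $A_1$ is $A_0$-projective, and then compares the sequence $0 \to J \to A \to A_0 \to 0$ with the start of the minimal projective resolution $\cdots \to A \otimes_{A_0} A_1 \to A \to A_0 \to 0$ to conclude $J \cong A \otimes_{A_0} A_1$, after which $A_k \cong A_1^{\otimes k}$ is read off degree by degree. You instead work directly with the multiplication map $\mu \colon A \otimes_{A_0} A_1 \to J$: you make the projectivity of $A_1$ explicit via $A_0 \otimes_A J \cong J/J^2 \cong A_1$ together with a splitting $J \oplus J' \cong A^n$ (a step the paper states only tersely), then split $\mu$ using the projectivity of $J$ and kill $\ker \mu$ by a graded Nakayama argument, legitimate because $J$ is nilpotent in a finite-dimensional positively graded algebra. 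Your route buys self-containedness---it avoids appealing to minimality/uniqueness of graded projective resolutions and replaces it with an elementary kernel computation---while the paper's route buys brevity, since once the minimal resolution is invoked the isomorphism $J \cong A \otimes_{A_0} A_1$ is immediate. The ``if'' directions are essentially identical: both exhibit $J$ as $A \otimes_{A_0} A_1$ (you by the degreewise canonical identification, the paper by appending the vanishing top tensor power) and deduce projectivity of $J$ from projectivity of $A_1$ over $A_0$.
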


\begin{proof}
Suppose that $A = A_0[A_1]$ and $A_1$ is a projective $A_0$-module. Observe that $A$ is a finite-dimensional algebra, so there exists a minimal number $n \geqslant 0$ such that $A_{n+1} \cong \underbrace{ A_1 \otimes _{A_0} \ldots \otimes _{A_0} A_1}_{n+1} = 0$. Without loss of generality we assume that $n >0$ since otherwise $J = 0$ is a trivial projective module. Therefore,
\begin{align*}
J & = \bigoplus _{i = 1}^n A_i = A_1 \oplus (A_1 \otimes _{A_0} A_1) \oplus  \ldots \oplus (\underbrace{ A_1 \otimes _{A_0} \ldots \otimes _{A_0} A_1}_n) \\
& = A_1 \oplus (A_1 \otimes _{A_0} A_1) \oplus  \ldots \oplus (\underbrace{ A_1 \otimes _{A_0} \ldots \otimes _{A_0} A_1}_{n+1}) \\
& \cong \big{(} A_0 \oplus A_1 \oplus  \ldots \oplus (\underbrace{ A_1 \otimes _{A_0} \ldots \otimes _{A_0} A_1}_n) \big{)} \otimes _{A_0} A_1 \\
& = A \otimes _{A_0} A_1,
\end{align*}
which is projective since $A_1$ is a projective $A_0$-module.

Conversely, suppose that $J = \bigoplus _{i \geqslant 1} A_i$ is a projective $A$-module. Since $J$ is generated in degree 1, $A_1$ must be a projective $A_0$-module. Moreover, we have a minimal projective resolution of $A_0$ as follows
\begin{equation*}
\xymatrix {\ldots \ar[r] & A \otimes _{A_0} A_1 \ar[r] & A \ar[r] & A_0 \ar[r] & 0}.
\end{equation*}
But we also have the short exact sequence $0 \rightarrow J \rightarrow A \rightarrow A_0 \rightarrow 0$. Since $J$ is projective, we deduce that $J \cong A \otimes _{A_0} A_1$. Therefore, $A_2 \cong A_1 \otimes _{A_0} A_1$, $A_3 \cong A_1 \otimes _{A_0} A_1 \otimes _{A_0} A_1$, and so on. Thus $A \cong A_0[A_1]$ as claimed.
\end{proof}

Now we prove Theorem 0.2.

\begin{proof}
For standardly stratified algebras, the equivalence of (1) and (2) has been established in Proposition 2.2, and the equivalence of (2) and (3) comes from the previous lemma and Theorem 0.1. Since all arguments work for right modules, we also have the equivalence of these statements for properly stratified algebras.
\end{proof}

We end this section with a combinatorial description of $\check {\mathcal{A}}$, the associated graded category of $\check{A}$ stratified for all linear orders. Let $Q = (Q_0, Q_1)$ be a finite acyclic quiver, where both the vertex set $Q_0$ and the arrow set $Q_1$ are finite. We then define a \textit{quiver of bimodules} $\tilde{Q} = (Q_0, Q_1, f, g)$: to each vertex $v \in Q_0$ the map $f$ assigns a finite-dimensional local algebra $A_v$, i.e., $f(v) = A_v$; for each arrow $\alpha: v \rightarrow w$, $g(\alpha)$ is a finite-dimensional $(A_w, A_v)$-bimodule.

The quiver of bimodules $\tilde{Q}$ determines a category $\mathcal{C}$. Explicitly, $\Ob \mathcal{C} = Q_0$. The morphisms between an arbitrary pair of objects $v, w \in Q_0$ are defined as follows. Let
\begin{equation*}
\xymatrix {\gamma: v = v_0 \ar[r] ^{\alpha_1} \ar[r] & v_1 \ar[r] ^{\alpha_2} & \ldots \ar[r] ^{\alpha_{n-1}} & v_{n-1} \ar[r] ^{\alpha_n} & v_n = w}
\end{equation*}
be an oriented path in $Q$. We define
\begin{equation*}
M_{\gamma} = g(\alpha_n) \otimes _{f (v_{n-1})} g(\alpha_{n-1}) \otimes _{f (v_{n-2})} \ldots \otimes_{f (v_1)} g(\alpha_1).
\end{equation*}
This is a $(f(w), f(v))$-bimodule. Then
\begin{equation*}
\mathcal{C} (v, w) = \bigoplus _{\gamma \in P(v, w)} M_{\gamma},
\end{equation*}
Where $P(v, w)$ is the set of all oriented paths from $v$ to $w$. The composite of morphisms is defined by tensor product. We call a category defined in this way a \textit{free directed} category. It is \textit{left regular} if for every arrow $\alpha: v \rightarrow w$, $g(\alpha)$ is a left projective $f(w)$-module. Similarly, we define \textit{right regular} categories. It is \textit{regular} if this category is both left regular and right regular.

Using these terminologies, we get the following combinatorial description of $\check{\mathcal{A}}$.

\begin{theorem}
Let $A$ be a finite-dimensional basic algebra whose associated category is directed. Then $A$ is standardly (resp., properly) stratified for all linear orders if and only if the graded category $\check {\mathcal{A}}$ is a left regular (resp., regular) free directed category.
\end{theorem}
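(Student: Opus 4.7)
The plan is to use Theorem 0.2 to reduce this statement to a purely algebraic dictionary between the tensor-algebra structure on $\check{A}$ and the free directed category structure on $\check{\mathcal{A}}$. By that theorem, $A$ is standardly (resp.\ properly) stratified for all linear orders if and only if $\check{A} \cong A_0[\check{A}_1]$ with $\check{A}_1 = J/J^2$ a left (resp.\ left and right) projective $A_0$-module, where $A_0 = \bigoplus_{\lambda \in \Lambda} e_\lambda A e_\lambda$. So it suffices to show that this tensor-algebra-with-projective-generator description of $\check{A}$ is equivalent to $\check{\mathcal{A}}$ arising as a left regular (resp.\ regular) free directed category.

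To construct the quiver of bimodules $\tilde{Q} = (Q_0, Q_1, f, g)$, I take $Q_0 = \Lambda$, set $f(\lambda) = e_\lambda A e_\lambda$ (a finite-dimensional local algebra), and decompose the $(A_0, A_0)$-bimodule $\check{A}_1$ as $\bigoplus_{\mu \neq \lambda} e_\mu \check{A}_1 e_\lambda$. For every nonzero component $e_\mu \check{A}_1 e_\lambda$ I place a single arrow $\alpha_{\lambda \mu} \colon \lambda \rightarrow \mu$ with $g(\alpha_{\lambda \mu}) = e_\mu \check{A}_1 e_\lambda$, which is naturally an $(f(\mu), f(\lambda))$-bimodule. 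Since $\mathcal{A}$ is directed, so is $\check{\mathcal{A}}$, and hence the underlying quiver $Q$ is acyclic. For a path $\gamma \colon \lambda = \lambda_0 \rightarrow \lambda_1 \rightarrow \cdots \rightarrow \lambda_n = \mu$, the bimodule $M_\gamma$ is by construction the summand of $e_\mu \check{A}_1^{\otimes_{A_0} n} e_\lambda = e_\mu \check{A}_n e_\lambda$ corresponding to the chosen sequence of intermediate idempotents, where the last equality uses $\check{A} \cong A_0[\check{A}_1]$. Summing $M_\gamma$ over $\gamma \in P(\lambda, \mu)$ and $n \geqslant 0$ recovers $e_\mu \check{A} e_\lambda = \check{\mathcal{A}}(e_\lambda, e_\mu)$, while composition in $\check{\mathcal{A}}$ (which is graded multiplication in $\check{A}$) matches the tensor-product composition in the free directed category by the universal property of $A_0[\check{A}_1]$; the two constructions therefore produce isomorphic categories.

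Finally, the regularity translation is the straightforward piece. Because each $e_\lambda A e_\lambda$ is local, a left $A_0$-module $M$ is projective if and only if each component $e_\lambda M$ is free over $e_\lambda A e_\lambda$; hence $\check{A}_1$ is a left projective $A_0$-module if and only if every $g(\alpha_{\lambda \mu}) = e_\mu \check{A}_1 e_\lambda$ is left projective over $f(\mu) = e_\mu A e_\mu$, i.e.\ if and only if $\tilde{Q}$ is left regular. The symmetric argument handles the right case, and the two together yield the regular case needed for the properly stratified statement. The main obstacle I anticipate is carefully bookkeeping the bimodule identifications in the middle step, in particular confirming that the categorical composition defined by tensoring along paths really does agree with the graded multiplication in $\check{A}$; once Theorem 0.2 supplies the tensor algebra structure, this comes down to the universal property of $A_0[\check{A}_1]$ together with the evident decomposition of $\check{A}_1^{\otimes_{A_0} n}$ along paths in $Q$, which makes the choice of arrow set irrelevant and the identification $\mathcal{C} \cong \check{\mathcal{A}}$ canonical.
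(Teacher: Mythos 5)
Your proposal is correct and follows essentially the same route as the paper: reduce via Theorem 0.2 to the tensor-algebra characterization, build the quiver of bimodules with $Q_0=\Lambda$, $f(\lambda)=e_\lambda A e_\lambda$, and one arrow $\lambda\to\mu$ carrying $g=e_\mu\check{A}_1 e_\lambda$ whenever this is nonzero, then identify the resulting free directed category with $\check{\mathcal{A}}$ and translate projectivity of $\check{A}_1$ into (left/right) regularity. Your extra bookkeeping of the path decomposition of $\check{A}_1^{\otimes_{A_0} n}$ and the componentwise projectivity over the local algebras just fills in what the paper labels straightforward.
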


\begin{proof}
It is straightforward to check that if $\check {\mathcal{A}}$ is a left regular free directed category, then $\check{A}$ satisfies (3) in Theorem 0.2. So $A$ is standardly stratified for all linear orders. Conversely, if $A$ is standardly stratified for all linear orders, then $\check {\mathcal{A}}$ is a directed category, and $\check{A}$ is a tensor algebra generated by $A_0 = \bigoplus _{\lambda \in \Lambda} e_{\lambda} A e_{\lambda}$ and a projective left $A_0$-module $\check{A}_1$. Define $Q = (Q_0, Q_1, f, g)$ in the following way: $Q_0 = \Lambda$, and $f(\lambda) = e_{\lambda} A_0 e_{\lambda}$. Arrows and the map $g$ are defined as follows: for $\lambda \neq \mu \in Q_0$, we put an arrow $\phi: \lambda \rightarrow \mu$ if $e_{\mu} \check{A}_1 e_{\lambda} \neq 0$ and define $g(\phi) = e_{\mu} \check{A}_1 e_{\lambda}$. In this way $Q$ defines a left regular free directed category which is isomorphic to $\check {\mathcal{A}}$. Since all arguments work for right modules, the proof is completed.
\end{proof}

\section{ Whether $\mathcal{F} (_{\preccurlyeq} \Delta)$ is closed under cokernels of monomorphisms?}

In Proposition 1.4 we proved that if $\mathcal{A}$ is directed and standardly stratified with respect to a linear order $\leqslant$, then the corresponding category $\mathcal{F} (_{\leqslant} \Delta)$ is closed under cokernels of monomorphisms. This result motivates us to study the general situation. Suppose $A$ is standardly stratified with respect to a fixed linear ordered set $(\Lambda, \preccurlyeq)$. In the following lemmas we describe several equivalent conditions for $\mathcal{F} (_{\preccurlyeq} \Delta)$ to be closed under cokernels of monomorphisms.

\begin{lemma}
The category $\mathcal{F} (_{\preccurlyeq} \Delta)$ is closed under cokernels of monomorphisms if and only if for each exact sequence $0 \rightarrow L \rightarrow M \rightarrow N \rightarrow 0$, where $L, M \in \mathcal{F} (_{\preccurlyeq} \Delta)$ and $L$ is indecomposable, $N$ is also contained in $\mathcal{F} (_{\preccurlyeq} \Delta)$.
\end{lemma}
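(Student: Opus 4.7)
The forward direction is immediate: a single indecomposable $L$ is a special case of the general hypothesis on cokernels. The real content lies in the converse, where one must bootstrap from the indecomposable case to arbitrary $L \in \mathcal{F}(_{\preccurlyeq}\Delta)$. My plan is to proceed by induction on the length $k$ of a $\Delta$-filtration of $L$.

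For the base case $k=1$, $L$ is itself a standard module $\Delta_\lambda$, which is indecomposable (standard modules are quotients of indecomposable projectives by a single submodule, hence have local endomorphism rings in the standardly stratified setting — or more concretely, they are indecomposable by definition as a cyclic quotient with simple top $S_\lambda$). Hence the assumed condition on indecomposable $L$ applies directly and yields $N \in \mathcal{F}(_{\preccurlyeq}\Delta)$.

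For the inductive step, assume the statement for all $L' \in \mathcal{F}(_{\preccurlyeq}\Delta)$ with filtration length strictly less than $k$, and let $L$ have a $\Delta$-filtration $0 = L_0 \subset L_1 \subset \cdots \subset L_k = L$ with each $L_i/L_{i-1} \cong \Delta_{\lambda_i}$. Given $0 \to L \to M \to N \to 0$ with $L, M \in \mathcal{F}(_{\preccurlyeq}\Delta)$, view $L_1 \cong \Delta_{\lambda_1}$ as a submodule of $M$ via the inclusion $L_1 \hookrightarrow L \hookrightarrow M$. This produces a short exact sequence
\begin{equation*}
0 \longrightarrow L_1 \longrightarrow M \longrightarrow M/L_1 \longrightarrow 0
\end{equation*}
with indecomposable $L_1 \in \mathcal{F}(_{\preccurlyeq}\Delta)$, so the assumed condition delivers $M/L_1 \in \mathcal{F}(_{\preccurlyeq}\Delta)$. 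A standard application of the third isomorphism theorem (or the snake lemma applied to the inclusion $L_1 \subset L$ and $L_1 \subset M$) then gives
\begin{equation*}
0 \longrightarrow L/L_1 \longrightarrow M/L_1 \longrightarrow N \longrightarrow 0,
\end{equation*}
in which $L/L_1$ carries a $\Delta$-filtration of length $k-1$ and $M/L_1$ lies in $\mathcal{F}(_{\preccurlyeq}\Delta)$. The inductive hypothesis now forces $N \in \mathcal{F}(_{\preccurlyeq}\Delta)$.

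There is no serious obstacle here; the only mildly delicate point is to keep the two exact sequences straight and to ensure the induction hypothesis is applied in the correct form — namely, inducting on the filtration length of the kernel term $L$ rather than on the total module $M$. One should also explicitly invoke that $\mathcal{F}(_{\preccurlyeq}\Delta)$ contains every standard module $\Delta_\lambda$, so $L_1$ is available as an indecomposable element of the category to which the hypothesis may be applied.
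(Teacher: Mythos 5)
Your proof is correct, but it takes a genuinely different route from the paper's. The paper proves the converse by induction on the number of indecomposable \emph{direct summands} of $L$: it splits off an indecomposable summand $L_1$, uses the known fact that $\mathcal{F}(_{\preccurlyeq}\Delta)$ is closed under direct summands to see $L_1$ and its complement lie in $\mathcal{F}(_{\preccurlyeq}\Delta)$, applies the hypothesis to $0 \to L_1 \to M \to M/L_1 \to 0$, and finishes by induction on the complement. You instead induct on the length of a $\Delta$-\emph{filtration} of $L$, peeling off the bottom factor $L_1 \cong \Delta_{\lambda_1}$, applying the hypothesis to $0 \to L_1 \to M \to M/L_1 \to 0$ and the inductive hypothesis to $0 \to L/L_1 \to M/L_1 \to N \to 0$. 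Your argument is sound: $\Delta_{\lambda_1}$ is indecomposable (simple top) and lies in $\mathcal{F}(_{\preccurlyeq}\Delta)$, so the hypothesis for indecomposable kernels certainly applies to it, and the third-isomorphism step is fine. What each approach buys: the paper's proof stays at the coarse level of direct-sum decompositions and leans on closure under direct summands (a cited, not entirely trivial, property of $\mathcal{F}(_{\preccurlyeq}\Delta)$), whereas yours needs only the elementary facts that standard modules are indecomposable and belong to $\mathcal{F}(_{\preccurlyeq}\Delta)$; moreover, since you invoke the hypothesis only for standard-module kernels, you have in effect proved the sharper criterion that the paper postpones to its Lemma 3.3 (reduction to kernels of the form $\Delta_\lambda$), and Lemma 3.1 follows from it. The mild cost is that your argument duplicates the filtration-length induction the paper carries out later, and it is slightly less in the spirit of Lemma 3.1 as an intermediate step in the paper's chain of reductions 3.1--3.3; but as a standalone proof of the stated lemma it is complete.
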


\begin{proof}
The only if direction is trivial, we prove the other direction: for each exact sequence $0 \rightarrow \tilde{L} \rightarrow M \rightarrow N \rightarrow 0$ with $\tilde{L}, M \in \mathcal{F} (_{\preccurlyeq} \Delta)$, we have $N \in \mathcal{F} (_{\preccurlyeq} \Delta)$ as well.

We use induction on the number of indecomposable direct summands of $\tilde{L}$. If $\tilde{L}$ is indecomposable, the conclusion holds obviously. Now suppose that the if part is true for $\tilde{L}$ with at most $l$ indecomposable summands. Assume that $\tilde{L}$ has $l+1$ indecomposable summands. Taking an indecomposable summand $L_1$ of $\tilde{L}$ we have the following diagram:
\begin{equation*}
\xymatrix{ & 0 \ar[d] & 0 \ar[d] \\
0 \ar[r] & L_1 \ar[d] \ar@{=}[r] & L_1 \ar[r] \ar[d] & 0\\
0 \ar[r] & \tilde{L} \ar[r] \ar[d] & M \ar[r] \ar[d] & N \ar[r] \ar@{=}[d] & 0 \\
0 \ar[r] & \bar{L} \ar[d] \ar[r] & \bar{M} \ar[d] \ar[r] & N \ar[r] & 0\\
 & 0 & 0}
\end{equation*}
Considering the middle column, $\bar{M} \in \mathcal{F} (_{\preccurlyeq} \Delta)$ by the given condition. Therefore, we conclude that $N \in \mathcal{F} (_{\preccurlyeq} \Delta)$ by using the induction hypothesis on the bottom row.
\end{proof}

\begin{lemma}
The category $\mathcal{F} (_{\preccurlyeq} \Delta)$ is closed under cokernels of monomorphisms if and only if for each exact sequence $0 \rightarrow L \rightarrow P \rightarrow N \rightarrow 0$, where $L, P \in \mathcal{F} (_{\preccurlyeq} \Delta)$ and $P$ is projective, $N$ is also contained in $\mathcal{F} (_{\preccurlyeq} \Delta)$.
\end{lemma}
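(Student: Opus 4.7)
The \emph{only if} direction is immediate: projective modules lie in $\mathcal{F}(_{\preccurlyeq}\Delta)$ because $A$ is standardly stratified with respect to $\preccurlyeq$, so any short exact sequence $0 \to L \to P \to N \to 0$ with $L, P \in \mathcal{F}(_{\preccurlyeq}\Delta)$ is a special case of the general setting.

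For the \emph{if} direction, the plan is to reduce an arbitrary short exact sequence $0 \to L \to M \to N \to 0$ with $L, M \in \mathcal{F}(_{\preccurlyeq}\Delta)$ to one whose middle term is projective, by passing through a projective cover of $M$ and taking a pullback. Concretely, I would first pick a projective module $P$ with an epimorphism $\pi : P \twoheadrightarrow M$, and set $K = \ker \pi$. Since $\mathcal{F}(_{\preccurlyeq}\Delta)$ is closed under kernels of epimorphisms (a standard property recalled in the introduction, cf.\ \cite{Cline, Dlab1, Webb}) and both $P$ and $M$ lie in $\mathcal{F}(_{\preccurlyeq}\Delta)$, we have $K \in \mathcal{F}(_{\preccurlyeq}\Delta)$.

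Next I would form the pullback $P' = \pi^{-1}(L) \subseteq P$, which sits in the diagram
\begin{equation*}
\xymatrix{
0 \ar[r] & K \ar@{=}[d] \ar[r] & P' \ar[r] \ar@{^{(}->}[d] & L \ar[r] \ar@{^{(}->}[d] & 0 \\
0 \ar[r] & K \ar[r] & P \ar[r]^{\pi} & M \ar[r] & 0.
}
\end{equation*}
By the snake lemma (or direct verification), the cokernel of the monomorphism $P' \hookrightarrow P$ coincides with the cokernel of $L \hookrightarrow M$, namely $N$. Since the top row is a short exact sequence with $K, L \in \mathcal{F}(_{\preccurlyeq}\Delta)$ and this category is closed under extensions, $P' \in \mathcal{F}(_{\preccurlyeq}\Delta)$. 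Applying the hypothesis to $0 \to P' \to P \to N \to 0$ yields $N \in \mathcal{F}(_{\preccurlyeq}\Delta)$, as required.

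There is no serious obstacle here: the argument is a routine pullback construction, and all the ingredients (projective modules are in $\mathcal{F}(_{\preccurlyeq}\Delta)$, closure of $\mathcal{F}(_{\preccurlyeq}\Delta)$ under kernels of epimorphisms and under extensions) are standard facts already cited in the preliminaries. The only point that requires a moment's care is the identification of the two cokernels in the pullback square, which one checks by a diagram chase or by invoking the snake lemma.
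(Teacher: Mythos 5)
Your argument is correct and is essentially the paper's own proof: the paper likewise takes a projective cover $\pi: P \twoheadrightarrow M$, forms the pullback $N' = \pi^{-1}(L)$ (an extension of $L$ by $\Omega(M) = \ker \pi$, which lies in $\mathcal{F}(_{\preccurlyeq}\Delta)$ by closure under kernels of epimorphisms and extensions), and applies the hypothesis to $0 \to N' \to P \to N \to 0$. The only cosmetic difference is that you allow an arbitrary projective presentation rather than a projective cover, which changes nothing.
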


\begin{proof}
It suffices to show the if part. Let $0 \rightarrow L \rightarrow M \rightarrow N \rightarrow 0$ be an exact sequence with $L, M \in \mathcal{F} (_{\preccurlyeq} \Delta)$. We want to show $N \in \mathcal{F} (_{\preccurlyeq} \Delta)$ as well. Let $P$ be a projective cover of $M$. Then we have a commutative diagram by the Snake Lemma:
\begin{equation*}
\xymatrix{ & 0 \ar[r] & \Omega(M) \ar[r] \ar[d] & N' \ar[r] \ar[d] & L \ar[r] & 0\\
 & & P \ar@{=}[r] \ar[d] & P \ar[d] \\
0 \ar[r] & L \ar[r] & M \ar[r] & N \ar[r] & 0,}
\end{equation*}
where all rows and columns are exact. Clearly, $\Omega(M) \in \mathcal{F} (_{\preccurlyeq} \Delta)$, so is $N'$ since $\mathcal{F} (_{\preccurlyeq} \Delta)$ is closed under extension. Considering the last column, we conclude that $N \in \mathcal{F} (_{\preccurlyeq} \Delta)$ by the given condition.
\end{proof}

Every $M \in \mathcal{F} (_{\preccurlyeq} \Delta)$ has a $_{\preccurlyeq} \Delta$-filtration $\xi$ and we define $[M: \Delta_{\lambda}]$ to be the number of factors isomorphic to $\Delta_{\lambda}$ in $\xi$. This number is independent of the particular $\xi$ (see \cite{Dlab1} and \cite{Erdmann}). We then define $l(M) = \sum _{\lambda \in \Lambda} [M: \Delta_{\lambda}]$ and call it the \textit{filtration length} of $M$, which is also independent of the choice of $\xi$.

\begin{lemma}
The category $\mathcal{F} (_{\preccurlyeq} \Delta)$ is closed under cokernels of monomorphisms if and only if for each exact sequence $0 \rightarrow \Delta_{\lambda} \rightarrow M \rightarrow N \rightarrow 0$, where $M \in \mathcal{F} (_{\preccurlyeq} \Delta)$ and $\lambda \in \Lambda$, $N$ is also contained in $\mathcal{F} (_{\preccurlyeq} \Delta)$.
\end{lemma}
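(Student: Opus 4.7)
The plan is to prove the nontrivial (\emph{if}) direction by induction on the filtration length $l(L)$, where $L$ is the kernel in an arbitrary short exact sequence $0 \to L \to M \to N \to 0$ with $L, M \in \mathcal{F}(_{\preccurlyeq}\Delta)$. The \emph{only if} direction is immediate, since every $\Delta_\lambda$ lies in $\mathcal{F}(_{\preccurlyeq}\Delta)$.

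First I would dispose of the base case $l(L) \leqslant 1$: if $l(L) = 0$ then $L = 0$ and $N \cong M \in \mathcal{F}(_{\preccurlyeq}\Delta)$, while if $l(L) = 1$ then $L \cong \Delta_\lambda$ for some $\lambda \in \Lambda$ and the hypothesis applies directly. For the inductive step with $l(L) \geqslant 2$, I would pick any $_{\preccurlyeq}\Delta$-filtration $0 = L_0 \subset L_1 \subset \ldots \subset L_n = L$ and let $\Delta_\lambda := L_1$ be its bottom factor. Then the quotient $\bar L := L/\Delta_\lambda$ inherits the filtration $0 \subset L_2/L_1 \subset \ldots \subset L_n/L_1 = \bar L$, so $\bar L \in \mathcal{F}(_{\preccurlyeq}\Delta)$ with $l(\bar L) = l(L) - 1$.

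Next I would compose the inclusion $\Delta_\lambda \hookrightarrow L$ with $L \hookrightarrow M$ to obtain a monomorphism $\Delta_\lambda \hookrightarrow M$, giving the short exact sequence $0 \to \Delta_\lambda \to M \to M/\Delta_\lambda \to 0$. The standing hypothesis, applied to this sequence, yields $M/\Delta_\lambda \in \mathcal{F}(_{\preccurlyeq}\Delta)$. A short diagram chase (or the snake lemma applied to the obvious $3 \times 3$ diagram with first column $0 \to \Delta_\lambda \to L \to \bar L \to 0$ and first row $0 \to \Delta_\lambda \to M \to M/\Delta_\lambda \to 0$) then produces an exact sequence
\begin{equation*}
0 \to \bar L \to M/\Delta_\lambda \to N \to 0
\end{equation*}
in which both $\bar L$ and $M/\Delta_\lambda$ lie in $\mathcal{F}(_{\preccurlyeq}\Delta)$ and $l(\bar L) < l(L)$. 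The inductive hypothesis then gives $N \in \mathcal{F}(_{\preccurlyeq}\Delta)$, completing the proof.

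I do not anticipate a significant obstacle: once one notices that the filtration length is the natural induction parameter and that peeling off a bottom $\Delta_\lambda$ converts the question about an arbitrary $L$ into one about a shorter $\bar L$ plus the hypothesized base case, the argument is purely formal. The well-definedness of $l(L)$ (independence from the chosen filtration) is already established in the paragraph preceding the lemma, and the ability to realize any fixed $\Delta_\lambda$ appearing in some filtration as the bottom factor is built into the definition of $\mathcal{F}(_{\preccurlyeq}\Delta)$.
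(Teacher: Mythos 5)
Your proof is correct and is essentially the paper's argument: both directions are handled the same way, with the nontrivial one done by induction on the filtration length $l(L)$ of the kernel and reduced via the same kind of $3\times 3$ / snake-lemma diagram. The only difference is cosmetic: you strip off the \emph{bottom} factor $L_1\cong\Delta_\lambda$ of a filtration of $L$ and apply the hypothesis to $0\to\Delta_\lambda\to M\to M/\Delta_\lambda\to 0$ before invoking the induction on $0\to L/\Delta_\lambda\to M/\Delta_\lambda\to N\to 0$, whereas the paper strips off a \emph{top} factor via $0\to L'\to L\to\Delta_\lambda\to 0$, applies the induction hypothesis to $0\to L'\to M\to N'\to 0$ first and the assumed condition to $0\to\Delta_\lambda\to N'\to N\to 0$ last, so the two reductions are mirror images of each other.
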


\begin{proof}
We only need to show the if part. Let $0 \rightarrow L \rightarrow M \rightarrow N \rightarrow 0$ be an exact sequence with $L, M \in \mathcal{F} (_{\preccurlyeq} \Delta)$. We use induction on the filtration length of $L$.

If $l(L) = 1$, then $L \cong \Delta_{\lambda}$ for some $\lambda \in \Lambda$ and the conclusion holds clearly. Suppose that the conclusion is true for all objects in $\mathcal{F} (_{\preccurlyeq} \Delta)$ with filtration length at most $s$ and assume $l(L) = s+1$. Then we have an exact sequence $0 \rightarrow L' \rightarrow L \rightarrow \Delta_{\lambda} \rightarrow 0$ for some $\lambda \in \Lambda$ and $l(L') = s$. Then we have a commutative diagram by the Snake Lemma:

\begin{equation*}
\xymatrix{ & 0 \ar[r] & L' \ar[d] \ar[r] & L \ar[r] \ar[d] & \Delta_{\lambda} \ar[r] & 0\\
 & & M \ar[d] \ar@{=}[r] & M \ar[d]\\
0 \ar[r] & \Delta_{\lambda} \ar[r] & N' \ar[r] & N \ar[r] & 0.}
\end{equation*}

Consider the first column. By induction hypothesis, $N' \in \mathcal{F} (_{\preccurlyeq} \Delta)$. By considering the bottom row we conclude that $N \in \mathcal{F} (_{\preccurlyeq} \Delta)$ from the given condition.
\end{proof}

Now we prove Theorem 0.3.

\begin{proof}
The first statement follows immediately from the above lemmas.

Now we prove the second statement. The claim is clear if $|\Lambda|=1$. So we assume $\Lambda$ has more than one elements. Take an element $\lambda \in \Lambda$ maximal with respect to $\preccurlyeq '$. Clearly, $\Delta'_{\lambda} \cong P_{\lambda} \in \mathcal{F} (_{\preccurlyeq} \Delta)$. Consider the quotient algebra $\bar{A} = A / Ae_{\lambda}A$. It is standardly stratified with respect to the restricted linear order $\preccurlyeq '$ on $\Lambda \setminus \{ \lambda\}$. We claim $\bar{A} \in \mathcal{F} (_{\preccurlyeq} \Delta)$ as well. Indeed, consider the exact sequence
\begin{equation*}
\xymatrix { 0 \ar[r] & Ae_{\lambda}A \ar[r] & A \ar[r] & \bar{A} \ar[r] & 0}.
\end{equation*}
Note that $A e_{\lambda}A \in \mathcal{F} (_{\preccurlyeq} \Delta)$ as a direct sum of $P_{\lambda}$. Therefore, $\bar{A} \in \mathcal{F} (_{\preccurlyeq} \Delta)$ as well since $\mathcal{F} (_{\preccurlyeq} \Delta)$ is closed under cokernels of monomorphisms. Taking a maximal element $\nu \in \Lambda \setminus \{\lambda \}$ with respect to $\preccurlyeq'$ and repeating the above procedure, we conclude that $_{\preccurlyeq'} \Delta_{\nu} \cong \bar{A} e_{\nu} \in \mathcal{F} (_{\preccurlyeq} \Delta)$ and $\bar{\bar{A}} = \bar{A} / \bar{A} e_{\nu} \bar{A} \in \mathcal{F} (_{\preccurlyeq} \Delta)$. Recursively, we proved that $_{\preccurlyeq'} \Delta_{\lambda} \in \mathcal{F} (\Delta)$ for all $\lambda \in \Lambda$, so $\mathcal{F} (_{\preccurlyeq'} \Delta) \subseteq \mathcal{F} (_{\preccurlyeq} \Delta)$.

The third statement can be proved by induction on $|\Lambda|$ as well. If $|\Lambda| = 1$, the claim is clear since $A \cong \Bbbk$. Suppose that the conclusion holds for $|\Lambda| = s$ and let $\Lambda$ be a linear ordered set with $s+1$ elements. Take a maximal element $\lambda$ in $\Lambda$. Then $\bar{A} = A / Ae_{\lambda}A$ is also a quasi-hereditary algebra. Moreover, $\mathcal{F} (_{\bar{A}} \Delta)$ is still closed under cokernels of monomorphisms. By the induction hypothesis, $\bar{A}$ is the quotient of a finite-dimensional hereditary algebra, and all standard modules are simple. Note that these standard $\bar {A}$-modules can be viewed as standard $A$-modules.

Choose a composition series of $\Delta_{\lambda} \cong P_{\lambda}$: $0 = M_0 \subseteq M_1 \subseteq \ldots \subseteq M_t = \Delta_{\lambda}$. It is clear that $M_i / M_{i-1} \cong S_{\lambda}$ if and only if $i=t$ since $A$ is quasi-hereditary. But $\mathcal{F} (_{\preccurlyeq} \Delta)$ is closed under cokernels of monomorphisms and $\Delta_{\mu} \cong S_{\mu} \cong P_{\mu} / \rad P_{\mu}$ for all $\mu \in \Lambda \setminus \{ \lambda \}$, we deduce that $S_{\lambda} \in \mathcal{F} (_{\preccurlyeq} \Delta)$. Thus $S_{\lambda} \cong \Delta_{\lambda}$.

It remains to show that the ordinary quiver of $A$ has no oriented cycles. For $P_{\lambda} = Ae_{\lambda}$ and $P_{\mu} = Ae_{\mu}$ with $\lambda \nsucceq \mu$, we claim $e_{\lambda} A e_{\mu}  \cong \text{Hom} _{\mathcal{A}} (P_{\lambda}, P_{\mu}) = 0$. Indeed, since $P_{\mu} \in \mathcal{F} (_{\preccurlyeq} \Delta)$ and all standard modules are simple, the composition factors of $P_{\mu}$ are those $S_{\nu}$ with $\nu \in \Lambda$ and $\nu \succcurlyeq \mu$. Thus $P_{\mu}$ has no composition factors isomorphic to $S_{\lambda}$. Therefore, Hom$ _{\mathcal{A}} (P_{\lambda}, P_{\mu}) =0$.
\end{proof}

An immediate corollary is:

\begin{corollary}
If $A$ is standardly stratified with respect to two different linear orders $\preccurlyeq$ and $\preccurlyeq'$ such that both $\mathcal{F} (_{\preccurlyeq} \Delta)$ and $\mathcal{F} (_{\preccurlyeq} \Delta)$ are closed under cokernels of monomorphisms, then $\mathcal{F} (_{\preccurlyeq} \Delta) = \mathcal{F} (_{\preccurlyeq'} \Delta)$.
\end{corollary}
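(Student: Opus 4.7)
The plan is to derive the corollary as a direct double application of statement (2) of Theorem 0.3, which has just been proved. That statement says: whenever $\mathcal{F}(_{\preccurlyeq}\Delta)$ is closed under cokernels of monomorphisms and $A$ is also standardly stratified with respect to another linear order $\preccurlyeq'$, we have the inclusion $\mathcal{F}(_{\preccurlyeq'}\Delta)\subseteq\mathcal{F}(_{\preccurlyeq}\Delta)$. The hypothesis of the corollary is symmetric in $\preccurlyeq$ and $\preccurlyeq'$, so I can apply that statement in both directions.

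Concretely, first I would invoke Theorem 0.3(2) with the pair $(\preccurlyeq,\preccurlyeq')$: since $A$ is standardly stratified for $\preccurlyeq'$ and $\mathcal{F}(_{\preccurlyeq}\Delta)$ is closed under cokernels of monomorphisms, we obtain $\mathcal{F}(_{\preccurlyeq'}\Delta)\subseteq\mathcal{F}(_{\preccurlyeq}\Delta)$. Then I would interchange the roles of the two orders and invoke the same statement with $(\preccurlyeq',\preccurlyeq)$: since $A$ is standardly stratified for $\preccurlyeq$ and $\mathcal{F}(_{\preccurlyeq'}\Delta)$ is closed under cokernels of monomorphisms, we obtain the reverse inclusion $\mathcal{F}(_{\preccurlyeq}\Delta)\subseteq\mathcal{F}(_{\preccurlyeq'}\Delta)$. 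Combining the two inclusions yields the desired equality.

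There is essentially no obstacle here: all the work has already been done in Theorem 0.3(2), and the corollary is a formal consequence of the symmetric hypothesis. The only thing to be careful about is to read the minor typo in the statement correctly, namely that the second occurrence of $\mathcal{F}(_{\preccurlyeq}\Delta)$ in the hypothesis should be $\mathcal{F}(_{\preccurlyeq'}\Delta)$; otherwise the claim has no content. With that understood, the proof reduces to the two-line argument sketched above, and no additional lemmas, filtration manipulations, or induction on $|\Lambda|$ are required.
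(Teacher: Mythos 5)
Your proposal is correct and is exactly the argument the paper intends: the corollary follows by applying Theorem 0.3(2) in both directions, using the symmetric hypothesis (with the typo read as you note). Nothing further is needed.
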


\begin{proof}
It is straightforward from the second statement of the previous theorem.
\end{proof}

\section{Examples}

In this section we describe an algorithm to determine whether there is a linear order $\preccurlyeq$ with respect to which $A$ is standardly stratified and $\mathcal{F} (_{\preccurlyeq} \Delta)$ is closed under cokernels, as well as several examples.

\subsection{An algorithm.}

Given an arbitrary algebra $A$, we want to check whether there exists a linear order $\preccurlyeq$ for which $A$ is standardly stratified and the corresponding category $\mathcal{F} (_{\preccurlyeq} \Delta)$ is closed under cokernels. It is certainly not an ideal way to check all linear orders. In the rest of this paper we will describe an algorithm to construct a set $\mathcal{L}$ of linear orders for $A$ satisfying the following property: $A$ is standardly stratified for every linear order in $\mathcal{L}$; moreover, if there is a linear order $\preccurlyeq$ for which $A$ is standardly stratified and $\mathcal{F} (_{\preccurlyeq} \Delta)$ is closed under cokernels, then $\preccurlyeq \in \mathcal{L}$.

As before, choose a set $\{e_{\lambda} \} _{\lambda \in \Lambda}$ of orthogonal primitive idempotents in $A$ such that $\sum _{\lambda \in \Lambda} e_{\lambda} = 1$ and let $P_{\lambda} = A e_{\lambda}$. The algorithm is as follows:

\begin{enumerate}
\item Define $O_1 = \{ \lambda \in \Lambda \mid \forall \mu \in \Lambda, \text{tr} _{P_{\lambda}} (P_{\mu}) \cong P_{\lambda}^{m_{\mu}} \}$. If $O_1 = \emptyset$, the algorithm ends at this step. Otherwise, continue to the second step.
\item Define a partial order $\leqslant '$ on $O_1$: $\lambda \leqslant ' \mu$ if and only if tr$_{P_{\mu}} (P_{\lambda}) \neq 0$ for $\lambda, \mu \in O_1$. We can check that this partial order $\leqslant'$ is well defined.
\item Take $e_{s_1} \in O_1$ which is maximal with respect to $\leqslant'$. Let $\bar {A} = A / A e_{s_1} A$.
\item Repeat the above steps for $\bar{A}$ recursively until the algorithm ends. Thus we get a chain of $t$ idempotents $e_{s_1}, e_{s_2}, \ldots, e_{s_t}$ and define $e_{s_1} \succ e_{s_2} \succ \ldots \succ e_{s_t}$.
\end{enumerate}
Let $\tilde{ \mathcal{L} }$ be the set of all linear orders obtained from the above algorithm, and let $\mathcal{L} \subseteq \tilde{ \mathcal{L} }$ be the set of linear orders with length $n = |\Lambda|$.

\begin{proposition}
The algebra $\mathcal{A}$ is standardly stratified for every $\preccurlyeq \in \mathcal{L}$.
\end{proposition}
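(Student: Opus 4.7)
The plan is to argue by induction on $n = |\Lambda|$. The base case $n=1$ is immediate: $A$ is local and any standard module equals its projective cover. For the inductive step, fix $\preccurlyeq \in \mathcal{L}$, arising from a chain $e_{s_1} \succ e_{s_2} \succ \ldots \succ e_{s_n}$. By the first step of the algorithm $e_{s_1} \in O_1$, so $\mathrm{tr}_{P_{s_1}}(P_\mu) \cong P_{s_1}^{m_\mu}$ for every $\mu \in \Lambda$. The tail $e_{s_2} \succ \ldots \succ e_{s_n}$ is the chain obtained by applying the algorithm to $\bar A = A/Ae_{s_1}A$, and it has length $n-1 = |\Lambda \setminus \{s_1\}|$ (since $\preccurlyeq$ has length $n$); hence the restricted order lies in $\mathcal{L}_{\bar A}$ and by the induction hypothesis $\bar A$ is standardly stratified with respect to it.

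To prove $A$ is standardly stratified for $\preccurlyeq$, it suffices to exhibit a $\Delta$-filtration of every $P_\mu$. For $\mu = s_1$, the maximality of $s_1$ forces $\Delta_{s_1} = P_{s_1}$. For $\mu \neq s_1$, identify $\mathrm{tr}_{P_{s_1}}(P_\mu) = Ae_{s_1}Ae_\mu$ in the standard way, so that the trace sequence
\begin{equation*}
\xymatrix{0 \ar[r] & \mathrm{tr}_{P_{s_1}}(P_\mu) \ar[r] & P_\mu \ar[r] & \bar P_\mu \ar[r] & 0}
\end{equation*}
realises $\bar P_\mu = \bar A e_\mu$ as the quotient. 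The submodule $\mathrm{tr}_{P_{s_1}}(P_\mu) \cong \Delta_{s_1}^{m_\mu}$ is already filtered. The quotient $\bar P_\mu$ admits a $\bar\Delta$-filtration by the induction hypothesis, and one checks that these factors, viewed as $A$-modules, are precisely the $\Delta$-modules of $A$.

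The one step that requires care is this identification $\bar\Delta_\nu = \Delta_\nu$ for $\nu \in \Lambda \setminus \{s_1\}$. By definition, $\bar\Delta_\nu$ is the largest quotient of $\bar P_\nu$ whose composition factors $S_\mu$ satisfy $\mu \preccurlyeq \nu$ and $\mu \neq s_1$. Because $s_1$ is the maximum of $\preccurlyeq$ and $\nu \neq s_1$, the second restriction is automatic, so $\bar\Delta_\nu$ is just the largest quotient of $P_\nu$ (which factors through $\bar P_\nu$, since such a quotient is annihilated by $e_{s_1}$ and hence by $Ae_{s_1}A$) with composition factors $S_\mu$ for $\mu \preccurlyeq \nu$. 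By the characterising property of $\Delta_\nu$, this quotient is $\Delta_\nu$ itself.

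Combining the two filtrations, $P_\mu$ has a $\Delta$-filtration whose top layer is a $\bar\Delta$-filtration of $\bar P_\mu$ and whose bottom layer is $\Delta_{s_1}^{m_\mu}$; hence $_AA \in \mathcal{F}(\Delta)$ and $A$ is standardly stratified for $\preccurlyeq$. The main obstacle is the bookkeeping in the previous paragraph (matching the $\bar A$-standard modules with the $A$-standard modules), which is essentially forced by $s_1$ being maximal; everything else is induction plus the defining property of elements of $O_1$.
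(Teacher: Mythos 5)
Your proof is correct and follows essentially the same route as the paper: induction on $|\Lambda|$, using that the $\preccurlyeq$-maximal element $s_1$ lies in $O_1$ (so $Ae_{s_1}Ae_\mu = \mathrm{tr}_{P_{s_1}}(P_\mu) \cong P_{s_1}^{m_\mu}$ is projective) and that the restricted order lies in $\mathcal{L}_{\bar A}$ for $\bar A = A/Ae_{s_1}A$. The only difference is that you spell out the reduction step the paper leaves implicit, namely the identification $\bar\Delta_\nu \cong \Delta_\nu$ and the assembly of the $\Delta$-filtration of each $P_\mu$ from the trace sequence, which is a correct and welcome elaboration rather than a different argument.
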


\begin{proof}
We use induction on $|\Lambda|$. The conclusion is clearly true if $|\Lambda| = 1$. Suppose that it holds for $|\Lambda| \leqslant n$ and consider the case that $|\Lambda| = n+1$.

Let $\preccurlyeq$ be an arbitrary linear order in $\mathcal{L}$ and take the unique maximal element $\lambda \in \Lambda$ with respect to $\preccurlyeq$. Consider the quotient algebra $\bar{A} = A / A e_{\lambda} A$. Then $\bar{A}$, by the induction hypothesis and our algorithm, is standardly stratified with respect to the restricted order on $\Lambda \setminus \{ \lambda \}$. It is clear from our definition of $\preccurlyeq$ that $A e_{\lambda} A = \bigoplus _{\mu \in \Lambda} \text{tr} _{P_{\lambda}} (P_{\mu})$ is projective. Thus $A$ is standardly stratified for $\preccurlyeq$.
\end{proof}

The next proposition tells us that it is enough to check linear orders in $\mathcal{L}$ to determine whether there exists a linear order $\preccurlyeq$ for which $\mathcal{A}$ is standardly stratified and the corresponding category $\mathcal{F} (_{\preccurlyeq} \Delta)$ is closed under cokernels of monomorphisms.

\begin{proposition}
Let $\preccurlyeq$ be a linear order on $\Lambda$ such that $A$ is standardly stratified and the corresponding category $\mathcal{F} (_{\preccurlyeq} \Delta)$ is closed under cokernels of monomorphisms. Then $\preccurlyeq \in \mathcal{L}$.
\end{proposition}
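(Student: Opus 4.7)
The plan is induction on $n = |\Lambda|$, with the base case $n = 1$ being trivial. For the inductive step, let $\lambda$ be the $\preccurlyeq$-maximum of $\Lambda$. I will show that $\lambda \in O_1$ and that $\lambda$ is maximal in $(O_1, \leqslant')$, so that the algorithm is entitled to choose $e_{s_1} = e_\lambda$. Then, passing to the quotient $\bar{A} = A/Ae_\lambda A$, the restriction of $\preccurlyeq$ to $\Lambda \setminus \{\lambda\}$ still makes $\bar{A}$ standardly stratified with $\mathcal{F}(_\preccurlyeq \bar{\Delta})$ closed under cokernels of monomorphisms: indeed, $\bar{\Delta}_\nu = \Delta_\nu$ for $\nu \neq \lambda$ since $\lambda$ is $\preccurlyeq$-maximal, and any short exact sequence in $\bar{A}$-mod with outer terms in $\mathcal{F}(\bar{\Delta})$ is such a sequence in $A$-mod; by hypothesis its cokernel acquires a $\Delta$-filtration in $A$-mod, and since $e_\lambda$ acts as zero on this cokernel, no copy of $\Delta_\lambda$ can appear. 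The induction hypothesis then supplies the remaining idempotents $e_{s_2}, \ldots, e_{s_n}$, realising $\preccurlyeq$ as an element of $\mathcal{L}$.

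The fact that $\lambda \in O_1$ is immediate from standard stratification: because $\lambda$ is $\preccurlyeq$-maximal we have $\Delta_\lambda = P_\lambda$, and for every $\mu \in \Lambda$ the submodule $\text{tr}_{P_\lambda}(P_\mu) \subseteq P_\mu$ is filtered by copies of $\Delta_\lambda$ only (as the quotient $P_\mu/\text{tr}_{P_\lambda}(P_\mu)$ is annihilated by $e_\lambda$ and inherits a $\Delta$-filtration). Hence $\text{tr}_{P_\lambda}(P_\mu) \cong P_\lambda^{m_\mu}$.

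The core of the argument is showing that $\lambda$ is $\leqslant'$-maximal in $O_1$. Suppose for contradiction that some $\mu \in O_1 \setminus \{\lambda\}$ satisfies $\text{tr}_{P_\mu}(P_\lambda) \neq 0$; since $\mu \in O_1$ this trace is isomorphic to $P_\mu^m$ for some $m \geqslant 1$, producing
$$0 \longrightarrow P_\mu^m \longrightarrow P_\lambda = \Delta_\lambda \longrightarrow C \longrightarrow 0.$$
Both outer terms lie in $\mathcal{F}(_\preccurlyeq \Delta)$, so closure under cokernels of monomorphisms forces $C \in \mathcal{F}(_\preccurlyeq \Delta)$; note that $C$ is a quotient of $\Delta_\lambda$. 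If $C = 0$, then the epimorphism $P_\mu^m \twoheadrightarrow P_\lambda$ exhibits $S_\lambda = \text{top}(P_\lambda)$ as a quotient of $\text{top}(P_\mu^m) = S_\mu^m$, impossible since $\mu \neq \lambda$. Otherwise $\text{top}(C) = S_\lambda$, so the topmost factor of any $_\preccurlyeq\Delta$-filtration of $C$ must be $\Delta_\lambda$, yielding a surjection $\sigma \colon C \twoheadrightarrow \Delta_\lambda$. Composing with the canonical projection $\pi \colon \Delta_\lambda \twoheadrightarrow C$ gives a surjective endomorphism $\sigma \circ \pi$ of $\Delta_\lambda = P_\lambda$, which is necessarily an isomorphism because $\text{End}(P_\lambda) = e_\lambda A e_\lambda$ is local and a surjective endomorphism of a finite-dimensional module is injective. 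This forces $\pi$ itself to be an isomorphism, contradicting $\text{tr}_{P_\mu}(P_\lambda) \neq 0$.

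The main obstacle is this final contradiction, which hinges on pinning down the quotients of $\Delta_\lambda$ that can live in $\mathcal{F}(_\preccurlyeq \Delta)$. Once the maximality of $\lambda$ in $(O_1, \leqslant')$ is established, the descent to $\bar{A}$ and the invocation of the induction hypothesis are routine.
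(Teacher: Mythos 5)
Your proof is correct and follows the paper's argument essentially step for step: the same induction on $|\Lambda|$, the same verification that the $\preccurlyeq$-maximal element $\lambda$ lies in $O_1$, and the same contradiction drawn from the sequence $0 \to \mathrm{tr}_{P_\mu}(P_\lambda) \to P_\lambda \to C \to 0$ together with closure under cokernels of monomorphisms. The only differences are expository: you make the final absurdity explicit (via a surjective endomorphism of $P_\lambda$, where the paper implicitly uses a dimension count against the filtration factor $\Delta_\lambda \cong P_\lambda$) and you justify the descent of the hypotheses to $\bar{A} = A/Ae_\lambda A$, which the paper asserts without proof.
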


\begin{proof}
The proof relies on induction on $|\Lambda|$. The claim is clear if $|\Lambda| = 1$. Suppose that the conclusion is true for $| \Lambda | \leqslant l$ and let $\Lambda$ be a set with $n=l+1$ elements. Note that our algorithm is defined recursively. Furthermore, the quotient algebra $\bar {A} = A / A e_{\lambda} A$ is also standardly stratified for the restricted linear order $\preccurlyeq$ on $\Lambda \setminus \{ \lambda \}$, where $\lambda$ is maximal with respect to $\preccurlyeq$; $\bar{\Delta} = \bigoplus _{\mu \in \Lambda \setminus \{\lambda \} } \Delta_{\mu}$; and $\mathcal{F} (\bar {\Delta})$ is also closed under cokernels of monomorphisms. Thus by induction it suffices to show that $\lambda \in O_1$, and is maximal in $O_1$ with respect to $\leqslant'$ (see the second step of the algorithm).

Consider $P_{\lambda} \cong \Delta_{\lambda}$. Since $A$ is standardly stratified for $\preccurlyeq$,  for each $\mu \in \Lambda$, tr$_{P_{\lambda}} (P_{\mu})$ is a projective module. Thus $\lambda \in O_1$.

If $\lambda$ is not maximal in $O_1$ with respect to $\leqslant '$, then we can choose some $\mu \in O_1$ such that $\mu >' \lambda$, i.e., tr$ _{P_{\mu}} (P_{\lambda}) \neq 0$. Since $\mu \in O_1$, by definition, tr$ _{P_{\mu}} (P_{\lambda}) \cong P_{\mu} ^m$ for some $m \geqslant 1$. Now consider the exact sequence:
\begin{equation*}
\xymatrix{ 0 \ar[r] & \text{tr} _{P_{\mu}} (P_{\lambda}) \ar[r] & P_{\lambda} \ar[r] & P_{\lambda} / \text{tr} _{P_{\mu}} (P_{\lambda}) \ar[r] & 0.}
\end{equation*}
Since tr$ _{P_{\mu}} (P_{\lambda}) \cong P_{\mu} ^m \in \mathcal{F} (_{\preccurlyeq} \Delta)$, $P_{\lambda} \in \mathcal{F} (_{\preccurlyeq} \Delta)$, and $\mathcal{F} (_{\preccurlyeq} \Delta)$ is closed under cokernels of monomorphism, we conclude that $P_{\lambda} / \text{tr} _{P_{\mu}} (P_{\lambda}) \in \mathcal{F} (_{\preccurlyeq} \Delta)$. This is impossible. Indeed, since $P_{\lambda} / \text{tr} _{P_{\mu}} (P_{\lambda})$ has a simple top $S_{\lambda} \cong P_{\lambda} / \rad P_{\lambda}$, if it is contained in $\mathcal{F} (_{\preccurlyeq} \Delta)$, then it has a filtration factor $\Delta_{\lambda} \cong P_{\lambda}$. This is absurd.

We have proved by contradiction that ${\lambda} \in O_1$ and is maximal in $O_1$ with respect to $\leqslant'$. The conclusion follows from induction.
\end{proof}

\subsection{Example illustrating the above algorithm.} The following example illustrates our algorithm.

Let $\mathcal{A}$ be the following algebra with relation $\alpha^2 = \delta^2 = \beta \alpha = \delta \gamma = \rho^2 = \rho \varphi = 0$.
\begin{equation*}
\xymatrix{ x \ar[r] ^{\beta} \ar@(lu, ld)[]|{\alpha} & y \ar[r] ^{\gamma} \ar[d] ^{\varphi} & z \ar@(ru, rd)[]|{\delta} \\
 & w \ar@(dl, dr)[]|{\rho}}.
\end{equation*}
The projective $\mathcal{A}$-modules are:
\begin{equation*}
\xymatrix{ & x \ar[dl] _{\alpha} \ar[dr] _{\beta} & & \\ x & & y \ar[dl] _{\gamma} \ar[dr] _{\phi} & \\ & z & & w}
\qquad \xymatrix{ & y \ar[dl] _{\gamma} \ar[dr] _{\phi} & \\ z & & w}
\qquad \xymatrix{ z \ar[d] _{\delta} \\ z}
\qquad \xymatrix{ w \ar[d] _{\rho} \\ w}
\end{equation*}

Then by the above algorithm, $O_1 = \{ x, y \}$ and $x \leqslant' y$ in $O_1$, we should take $y$ as the maximal element. But then $O_2 = \{ x, z, w \}$ and all these elements are maximal in $O_2$ with respect to $\leqslant '$. Thus we get three choices for $O_3$. Similarly, the two elements in each $O_3$ are maximal with respect to $\leqslant'$. In conclusion, 6 linear orders are contained in $\mathcal{L}$: $y \succ x \succ z \succ w$, $y \succ x \succ w \succ z$, $y \succ z \succ x \succ w$, $y \succ z \succ w \succ x$, $y \succ w \succ z \succ x$, and $y \succ w \succ x \succ z$. For all these six linear orders $\mathcal{A}$ is standardly stratified and has the same standard modules. Moreover, the category $\mathcal{F} (_{\preccurlyeq} \Delta)$ is closed under cokernels of monomorphisms.
\begin{equation*}
\xymatrix{ x \ar[d] _{\alpha} \\ x}
\qquad \xymatrix{ & y \ar[dl] _{\gamma} \ar[dr] _{\phi} & \\ z & & w}
\qquad \xymatrix{ z \ar[d] _{\delta} \\ z}
\qquad \xymatrix{ w \ar[d] _{\rho} \\ w}
\end{equation*}

\subsection{Different orders in $\mathcal{L}$ determine different standard modules.}

In general, for different linear orders in $\mathcal{L}$ the corresponding standard modules are different.

Let $\mathcal{A}$ be the following category with relation: $\delta^2 = \delta \alpha =0$, $\beta \delta = \beta'$.
\begin{equation*}
\xymatrix{ x \ar[r] ^{\alpha} & z \ar@(dl,dr)[]|{\delta} & y \ar@/^/[l] ^{\beta} \ar@/_/[l] _{\beta'} }
\end{equation*}

The reader can check that the following two linear orders are contained in $\mathcal{L}$: $x \succ z \succ y$ and $y \succ x \succ z$. The corresponding standard modules are:
\begin{equation*}
\xymatrix{ x \ar[d] _{\alpha} \\ z} \qquad y \qquad \xymatrix{ z \ar[d] _{\delta} \\ z}
\end{equation*}
and
\begin{equation*}
\xymatrix{ x \ar[d] _{\alpha} \\ z} \qquad \xymatrix{y \ar[d] _{\beta} \\ z \ar[d] _{\delta} \\ z} \qquad \xymatrix{ z \ar[d] _{\delta} \\ z}
\end{equation*}
It is easy to see that $\mathcal{F} (\Delta)$ corresponding to the first order is closed under cokernels of monomorphisms.

\subsection{Not all linear orders can be obtained by the algorithm.}

We reminder the reader that although $A$ is standardly stratified for all linear orders in $\mathcal{L}$, it does not imply that all linear orders for which $A$ is standardly stratified are contained in $\mathcal{L}$. Moreover, it is also wrong that for every linear order $\preccurlyeq \notin \mathcal{L}$, there exists some $\tilde{ \preccurlyeq } \in \mathcal{L}$ such that $\mathcal{F} (_{\preccurlyeq} \Delta) \subseteq \mathcal{F} (_{\tilde {\preccurlyeq}} {\Delta})$. Consider the following example.

Let $A$ be the path algebra of the following quiver with relations: $\delta^2 = \delta \gamma = 0$, $\delta \beta = \beta'$.
\begin{equation*}
\xymatrix{ x \ar[r]^{\alpha} \ar@/^2pc/[rr] ^{\gamma} & y \ar@<0.5ex>[r] ^{\beta} \ar@<-0.5ex>[r] _{\beta'} & z \ar@(rd,ru)[]|{\delta}}
\end{equation*}
The structures of indecomposable projective $A$-modules are:
\begin{equation*}
\xymatrix{ & x \ar[dl] _{\alpha} \ar[dr] _{\gamma} & \\ y \ar[d] _{\beta} & & z \\ z \ar[d] _{\delta} & & \\ z & &} \qquad
\xymatrix{y \ar[d] _{\beta} \\ z \ar[d] _{\delta} \\ z} \qquad
\xymatrix{ z \ar[d] _{\delta} \\ z}
\end{equation*}
Applying the algorithm, we get a unique linear order $y \succ x \succ z$ contained in $\mathcal{L}$, and the corresponding standard modules are:
\begin{equation*}
\Delta_x = \xymatrix{ x \ar[d] _{\gamma} \\ z} \qquad \Delta_y \cong P_y \qquad \Delta_z \cong P_z.
\end{equation*}
But there is another linear order $x \succ z \succ y$ for which $A$ is also standardly stratified with standard modules
\begin{equation*}
\Delta_x' \cong P_x \quad \Delta_y' = y \quad \Delta_z' \cong P_z.
\end{equation*}
Both $\mathcal{F} (_{\preccurlyeq} \Delta)$ and $\mathcal{F} (_{\preccurlyeq'} \Delta)$ are not closed under cokernels of monomorphisms. Moreover, each of them is not contained in the other one.

\subsection{$\mathcal{F} (_{\preccurlyeq} \Delta)$ is maximal but not closed under cokernels of monomorphisms.}

In general it is possible that there are more than one linear orders with respect to which $A$ is standardly stratified. However, among these categories $\mathcal{F} (_{\preccurlyeq} \Delta)$, there is at most one which is closed under cokernels of monomorphisms. If this category exists, it is the unique maximal category and contains all other ones as subcategories. But the converse of this statement is not true by the next example. That is, if there is a linear order $\preccurlyeq$ with respect to which $A$ is standardly stratified and $\mathcal{F} (_{\preccurlyeq} \Delta)$ is the unique maximal category, $\mathcal{F} (_{\preccurlyeq} \Delta)$ might not be closed under cokernels of monomorphisms.

Let $A$ be the path algebra of the following quiver with relations: $\gamma^2 = \rho^2 = \delta \gamma = \rho \delta =0$, $\gamma \alpha = \alpha'$, $\gamma \beta = \beta'$, and $\delta \alpha = \delta \beta$.
\begin{equation*}
\xymatrix {x \ar@/^1.5pc/[rr] ^{\alpha} \ar@/^1pc/[rr] _{\alpha'} \ar@/_1.5pc/[rr] _{\beta} \ar@/_1pc/[rr] ^{\beta'} & & y \ar@(dl, dr)[]|{\gamma} \ar[r]^{\delta} & z \ar@(rd,ru)[]|{\rho} }
\end{equation*}

The structures of indecomposable projective $A$-modules are described as follows:

\begin{equation*}
\xymatrix{ & & x \ar[dl] _{\alpha} \ar[dr] _{\beta} & & \\ & y \ar[dl] _{\gamma} \ar[dr] _{\delta} & & y \ar[dl] _{\delta} \ar[dr] _{\gamma}& \\ y & & z & & y } \qquad
\xymatrix{ & y \ar[dl] _{\gamma} \ar[dr] _{\delta} & \\ y & & z} \qquad
\xymatrix{ z \ar[d] _{\rho} \\ z}
\end{equation*}

It is not hard to check that if $A$ is standardly stratified with respect to some linear order $\preccurlyeq$, then all standard modules coincide with indecomposable projective modules. Therefore, $\mathcal{F} (_ {\preccurlyeq} \Delta)$ is the category of all projective modules and is the unique maximal category. However, we get an exact sequence as follows, showing that $\mathcal{F} (\Delta)$ is not closed under cokernels of monomorphisms:
\begin{equation*}
\xymatrix{ 0 \ar[r] & P_y \ar[r] & P_x \ar[r] & M \ar[r] & 0}
\end{equation*}
where $M \notin \mathcal{F} (\Delta)$ has the following structure:
\begin{equation*}
\xymatrix{ x \ar[d] _{\alpha} \\ y\ar[d] _{\gamma} \\ y}
\end{equation*}

\subsection{$\mathcal{F} (_{\preccurlyeq} \Delta)$ is closed under cokernels of monomorphisms but $\mathcal{A}$ is not directed.}

The last statement of Theorem 0.3 tells us that if $A$ is quasi-hereditary and $\mathcal{F} (_{\preccurlyeq} \Delta)$ is closed under cokernels of monomorphisms, then $\mathcal{A}$ is a directed category. This is incorrect if $A$ is supposed to be standardly stratified, as shown by the next example.

Let $A$ be the path algebra of the following quiver with relations $\delta^2 = \delta \alpha = \beta \delta = \beta \alpha = \gamma \beta =0$. Let $x \succ z \succ y$.
\begin{equation*}
\xymatrix{ x \ar[dr] ^{\alpha} & & z \ar[ll] _{\gamma}\\
& y \ar[ur] ^{\beta} \ar@(dl,dr)[]|{\delta} }
\end{equation*}
Indecomposable projective modules of $A$ are described below:
\begin{equation*}
\xymatrix{ x \ar[d] _{\alpha} \\ y} \qquad
\xymatrix{ & y \ar[dl] _{\delta} \ar[dr] ^{\beta} & \\ y & & z}
\qquad \xymatrix{ z \ar[d] _{\gamma} \\ x \ar[d] _{\alpha} \\ y}
\end{equation*}
The standard modules are:
\begin{equation*}
\xymatrix{ x \ar[d] _{\alpha} \\ y} \qquad
\xymatrix{ y \ar[d] _{\delta} \\ y}
\qquad z
\end{equation*}
It is clear that $A$ is standardly stratified. Moreover, by (1) of Theorem 0.3, $\mathcal{F} (\Delta)$ is closed under cokernels of monomorphisms. But $\mathcal{A}$ is not a directed category.

\end{document}